\DeclareFontFamily{U}{BOONDOX-calo}{\skewchar\font=45 }
\DeclareFontShape{U}{BOONDOX-calo}{m}{n}{
  <-> s*[1.05] BOONDOX-r-calo}{}
\DeclareFontShape{U}{BOONDOX-calo}{b}{n}{
  <-> s*[1.05] BOONDOX-b-calo}{}
\DeclareMathAlphabet{\mathcalboondox}{U}{BOONDOX-calo}{m}{n}
\SetMathAlphabet{\mathcalboondox}{bold}{U}{BOONDOX-calo}{b}{n}
\DeclareMathAlphabet{\mathbcalboondox}{U}{BOONDOX-calo}{b}{n}
\numberwithin{equation}{section}
\theoremstyle{plain}
\newtheorem{theorem}[subsection]{Theorem}
\newtheorem{lemma}[subsection]{Lemma}
\newtheorem{corollary}[subsection]{Corollary}
\newtheorem{proposition}[subsection]{Proposition}
\theoremstyle{definition}
\newtheorem{definition}[subsection]{Definition}
\theoremstyle{remark}
\newtheorem{remark}[subsection]{Remark}
\newcommand{\ind}{\operatorname{ind}}
\newcommand{\id}{\operatorname{Id}}
\newcommand{\dom}{\operatorname{dom}}
\newcommand{\loc}{\operatorname{loc}}
\newcommand{\RR}{\mathbb{R}}\newcommand{\AAA}{\mathbb{A}}
\newcommand{\ZZ}{\mathbb{Z}}
\newcommand{\DD}{\mathcal{D}}\newcommand{\cDD}{\check{\mathcal{D}}}
\newcommand{\cbeta}{\check{\beta}}
\newcommand{\cgamma}{\check{\gamma}}
\newcommand{\bfu}{\mathbf{u}}
\newcommand{\bfv}{\mathbf{v}}
\newcommand{\bfw}{\mathbf{w}}
\newcommand{\bff}{\mathbf{f}}
\newcommand{\bfg}{\mathbf{g}}
\newcommand{\bfh}{\mathbf{h}}
\newcommand{\upper}{\uppercase\expandafter}
\newcommand{\n}{\nabla}
\newcommand{\p}{\partial}
\newcommand{\pM}{{\p M}}
\newcommand{\supp}{\operatorname{supp}}
\newcommand{\End}{\operatorname{End}}
\newcommand{\spf}{\operatorname{sf}}
\renewcommand{\AA}{\mathbb{A}}
\newcommand{\IM}{\operatorname{Im}}
\newcommand{\oeta}{\bar{\eta}}
\newcommand{\res}{\operatorname{res}}
\newcommand{\Q}{\mathcal{Q}}
\newcommand{\E}{\mathcal{E}}
\newcommand{\ch}{\operatorname{ch}}
\newcommand{\TAS}{\operatorname{T\hspace{.05em}\alpha}_{AS}}
\newcommand{\dist}{\operatorname{dist}}
\newcommand{\<}{\langle}
\renewcommand{\>}{\rangle}
\newcommand{\RE}{\operatorname{Re}}
\newcommand{\comp}{\mathrm{comp}}
\renewcommand{\AA}{\mathcal{A}}
\newcommand{\HAs}[1]{H_{\AA_{#1}}^s(\Sigma_{#1},E_{#1})}
\newcommand{\HAsp}[1]{H_{\AA_{#1}}^s(\Sigma_{#1},E^+|_{\Sigma_{#1}})}
\newcommand{\LS}[1]{L^2(\Sigma_{#1},E_{#1})}
\newcommand{\LSI}[2]{L^2_{#2}(\Sigma_{#1},E_{#1})}
\begin{document}

\author[Maxim Braverman]{Maxim Braverman${}^\dag$}
\address{Department of Mathematics,
Northeastern University,
Boston, MA 02115,
USA}

\email{m.braverman@northeastern.edu}
\urladdr{www.math.neu.edu/~braverman/}

\subjclass[2010]{58J20,58J28, 58J30, 58J32, 19K56}
\keywords{Callias, index, Atiyah-Patody-Singer, boundary value problem, globally hyperbolic, chiral anomaly}
\thanks{${}^\dag$†Partially supported by the Simons Foundation collaboration grant \#G00005104.}

\title[Index on Lorentzian manifolds]{An index of strongly Callias operators on Lorentzian manifolds with non-compact boundary}

\begin{abstract}
We consider hyperbolic Dirac-type operator with growing potential on a spatially non-compact globally hyperbolic manifold. We show that the Atiyah-Patodi-Singer boundary value problem for such operator is Fredholm and obtain a formula for this index in terms of the local integrals and the relative eta-invariant introduced by Braverman and Shi. This extends recent results of B\"ar and Strohmaier, who studied the index of a hyperbolic Dirac operator on a spatially compact globally hyperbolic manifold. 
\end{abstract}

\maketitle


\section{Introduction}\label{S:introduction}

Recently  B\"ar and Strohmaier, \cite{BarStrohmaier15}, discovered that a Dirac operator $D$ with Atiyah-Patodi-Singer (APS) boundary conditions on a spatially compact globally hyperbolic manifold is Fredholm. This is quite surprising, since the operator is not elliptic. B\"ar and Strohmaier showed that the index of the APS boundary value  problem for such operator is equal to the index of the APS boundary problem for an elliptic operator, obtained from $D$ by a ``Wick rotation". Thus this index can be computed by the usual APS index theorem \cite{APS123}. 

Besides significant contribution to the index theory, the result of B\"ar and Strohmaier provides the first mathematically rigorous description of chiral anomaly in quantum field theory, \cite{BarStrohmaier16}, but only in spatially compact case. It is desirable to extend the results of  \cite{BarStrohmaier15} to spatially non-compact manifolds, thus, in particular, providing a mathematically rigorous  description of anomalies in  more  realistic physical situations. The current paper is an attempt to do so by studying so called strongly Callias-type operators on spatially non-compact globally hyperbolic manifolds. 

A systematic treatment of the index of boundary value problems for strongly Callias-type operators on non-compact manifolds with non-compact boundary was given in \cite{BrShi17,BrShi17b,Shi18,BrShi18}. In particular, in \cite{BrShi17,BrShi17b} an APS-type index formula is obtained for  strongly Callias-type operators on so called  {\em essentially cylindrical manifolds} -- manifolds, which outside of a compact set look like a product $[0,1]\times Y$. 

In this paper we consider a (non-compact) manifold $M= [0,1]\times \Sigma$ and endow it with a Lorentzian metric $\<\cdot,\cdot\>$,  which is product outside of a compact set. Let $E=E^+\oplus E^-$ be a graded Dirac bundle over $M$ and let $D$ be the corresponding Dirac operator. 
A {\em strongly Callias-type operator} is the operator $\DD:=D+F$, where $F$ is a self-adjoint bundle map (called the {\em Callias potential}) which anticommutes with the Cliford multiplication and satisfies certain growth  conditions at infinity. 
On manifolds without boundary these conditions guarantee that the spectrum of $\DD$ is descrete. 
This implies, in particular,  that the spectrum of the restriction $\AA_t$ of $\DD$ to each space-like hypersurface $\Sigma_t=\{t\}\times \Sigma$ is discrete. We also assume that {\em there exists a compact subset $K\subset \Sigma$, such that the restriction of all the structures  to the complement of\, $[0,1]\times K$ are product}. The first main result of the paper is that the APS boundary value problem of $\DD$ is Fredholm. This extends a result of \cite{BarStrohmaier15} to our non-compact situation. 
Next, we show that the index of this boundary value problem is equal to the APS index of the elliptic strongly Callias-type operator $\cDD$, obtained from $\DD$ by Wick rotation. This allows us to compute this index by an APS-type index formula (with eta-invariant replaced with the relative eta-invariant introduced in \cite{BrShi17,BrShi17b}).

Our proof of Fredholmness of the APS boundary value problem for $\DD$ is quite different from that in  \cite{BarStrohmaier15}, because we need to study the behavior of the solutions of $\DD\bfu=0$ at infinity. Let us discuss the main steps of the proof.

\subsection{The wave evolution operator}\label{SS:Ievolution}
In Section~\ref{S:evolution} we carefully study the behavior of the solutions of the wave equation $\DD\bfu=\bff$ at infinity. This allows us do define  spaces of sections of $E$, in which the inhomogeneous  Cauchy problem for $\DD$ is well-posed.  As a consequence, we define the unitary {\em wave evolution operator} $Q:L^2(\Sigma_0,E^+_0)\to L^2(\Sigma_1,E^+_1)$, where $E_t^+$ denotes the restriction of $E^+$ to $\Sigma_t:=\{t\}\times\Sigma\subset M$. 

Let $\AA_t$ denote the restriction of $\DD$ to $\Sigma_t$.  Following \cite{BarStrohmaier15} we decompose the space of $L^2$-sections over $\Sigma_t$ into the direct sum of the spectral subspaces of $\AA_t$ corresponding to positive and negative part of the spectrum. We write $Q$ as a matrix 
\begin{equation}\label{E:IQmatrix}
	Q\ = \ \begin{pmatrix}
	Q_{++}&Q_{+-}\\Q_{-+}&Q_{--}
	\end{pmatrix}
\end{equation}
with respect to this decomposition. A key resut here is that the operators $Q_{++}$ and $Q_{--}$ are Fredholm. The proof is  quite different from \cite{BarStrohmaier15} because of the non-compactness of $\Sigma$. In fact,  one of the main steps of the proof is showing the compactness of the ``off-diagonal" terms $Q_{+-}$ and $Q_{-+}$. In  \cite{BarStrohmaier15} it is done by showing that these operators are Fourier integral operators of negative order. On a compact manifold this implies compactness. 
On a non-compact manifold to establish compactness of an operator one also needs estimates on its ``behavior at infinity". To obtain such estimates we consider two compactly supported cut-off functions $\phi, \psi:\Sigma\to [0,1]$ such that support of $\phi$ is ``much bigger" than $K$ and support of $\psi$ is ``much bigger" than support of $\phi$. We then write 
\[
		Q_{-+}\ = \ \phi\circ Q_{-+}\circ \psi 
	\ + \ \phi\circ Q_{-+}\circ (1-\psi)
	\ + \ (1-\phi)\circ Q_{-+}\circ \psi
	\ + \ (1-\phi)\circ Q_{-+}\circ (1-\psi),
\]
and proof the compactness of each of the four terms in the right hand side separately. Using the finite propagation speed property of the wave equation we reduce the study of the first three terms to a study of the evolution operator on a compact manifold. 
The last term is supported on the complement of $[0,1]\times K$ where $\DD$ is a product.  We then show that this term is compact using an explicit computation of the restriction of $Q$ to the complement of $[0,1]\times K$.

Let $\DD_{APS}$ denote the operator $\DD$ with APS boundary conditions.  A verbatim repetition of the arguments in \cite[\S3]{BarStrohmaier15} shows that this operator is Fredholm and 
\begin{equation}\label{E:IindD=indQ}
	\ind\DD^+_{APS}\ = \ \ind Q_{--}.
\end{equation}

\subsection{The index formula}\label{SS:Iindexformula}
Let $\cDD$ be the ``Wick rotation" of $\DD$. This is an elliptic strongly Callias-type operator on $M$. We deform it to a new strongly Callias-type operator $\cDD^0$, which is product near $\pM$. Applying the deformation argument of \cite{Gilkey93} to the index formula of \cite{BrShi17,BrShi17b} we conclude that the APS index of $\cDD$ is given by 
\begin{multline}\label{E:Igeneralindex}
	\ind\cDD^+_{APS}\ = \ 
		\int_M\,\alpha_{AS}(\check{\DD}^+) 
		\ + \ \int_{\Sigma_1}  \TAS(\cDD^+,\cDD^{0+}) 
	\ - \ \int_{\Sigma_0}  \TAS(\cDD^+,\cDD^{0+})
	\\ + \ 	
	\frac{\eta(\AA_0,\AA_1)-\dim\ker\AA_0-\dim\ker\AA_1}{2},
\end{multline}
where $\alpha_{AS}(\cDD^+)$ is the Atiyah-Singer integrand,  $\TAS(\cDD^+,\cDD^{0+})$ is the transgression of $\alpha_{AS}(\cDD^+)$, and $\eta(\AA_0,\AA_1)$ is the {\em relative eta-invariant} introduced in 
\cite{BrShi17,BrShi17b}. 
Morally, the relative eta invariant $\eta(\AA_0,\AA_1)$ is the difference of the eta-invariants of $\AA_1$ and $\AA_0$, but the later invariants might not be defined in non-compact case. However, it is shown in \cite{BrShi17b} that, in many respects,  $\eta(\AA_0,\AA_1)$  behaves like it were the difference. In particular,  if $\AAA:= \{\AA_1^t\}_{0\le t\le 1}$ is a smooth family of strongly Callias-type operator, whose restriction to $M\backslash\big([0,1]\times K\big)$ is indepnedent of $t$. Then the spectral flow $\spf(\AAA)$ is well defined and 
\begin{equation}\notag
	2\spf(\AAA)\ = \ 
	\eta(\AA_1^1,\AA_0)\ - \ \eta(\AA_1^0,\AA_0)\ - \ 
	\int_0^1\,\big(\frac{d}{ds}\oeta(\AA_1^s,\AA_0)\big)\,ds,
\end{equation}
where $\oeta(\AA_1^s,\AA_0)\big)$ denote the reduction of $\eta(\AA_1^s,\AA_0)\big)$ modulo integers.
Using this formula and the arguments in Sections~4.1-4.2 of \cite{BarStrohmaier15} we show that 
\begin{equation}\notag
	\ind\cDD^+_{APS}\ = \ \spf(\AAA) \ - \ \dim\ker(\AA_1)
	\ = \ 
	\ind Q_{--}.
\end{equation}
Combining this equality with \eqref{E:IindD=indQ} we conclude that 
\[
	\ind\cDD^+\ = \ \ind\DD^+.
\]
In particular, $\ind\DD^+$ is given by the APS-type formula \eqref{E:Igeneralindex}.

\subsection*{Acknowledgments}
I would like to thank the Max Plank Institute for Mathematics in Bonn, where most of this work was conducted. I am also grateful to Christian B\"ar, Pengshuai Shi,  Matthias Lesch, Werner Ballmann, and Yafet Sanchez Sanchez for valuable discussions. 

\section{The setting}\label{S:setting}

In this section we introduce our main objects: a spatially non-compact globally hyperbolic  manifold $X$ and a Callias-type operator on it. When possible we use the notation of \cite{BarStrohmaier15}.

\subsection{A Dirac bundle over a globally hyperbolic manifold}\label{SS:glhyperbolic}

Let $\Sigma$ be a (possibly non-compact) odd-dimensional manifold and set $M:= [0,1]\times \Sigma$.  We endow $M$ with 
Lorentzian metric given by 
\begin{equation}\label{E:gM}
	\<\cdot,\cdot\>\ := \ - N^2\,dt^2\ + \ g^\Sigma_t, 
\end{equation}
where $g^\Sigma_t$ ($0\le t\le 1$) is a smooth family of complete Riemannian metrics on $\Sigma$ and $N=N(x,t)$ is a smooth  function, called the {\em lapse function}. 

The manifold $M$ is foliated by spacelike (i.e. Riemannian) hypersurfaces 
 $\Sigma_t:= \{t\}\times \Sigma$. We denote by $\nu$ the past-directed timelike vector field on $M$ with $\<\nu,\nu\>=-1$ which is perpendicular to all $\Sigma_t$. In coordinates, we have $\nu=-\frac1N\frac{\p}{\p t}$, where $N$ is the lapse function, cf.  \eqref{E:gM}.

\begin{definition}\label{D:Dirac bundle}
A (graded) {\em Dirac bundle} over $M$ is a graded vector bundle $E=E^+\oplus E^-$ endowed with 
\begin{enumerate}
\item a graded Clifford action $\gamma:TM\to \End(E)$, such that $\gamma(v)^2= -\<v,v\>$ and $\gamma(v):E^\pm\to E^\mp$ ($v\in TM$); we set $\beta:= \gamma(\nu)$ so that $\beta^2=1$;
\item 
a positive definite scalar product $(\cdot,\cdot)_E$ such that $E^+$ is orthogonal to $E^-$ and the indefinite inner product 
\begin{equation}\label{E:indefinnerproduct}
	\<\cdot,\cdot\>_E\ := \ (\cdot,\beta\cdot)_E
\end{equation}
satisfies 
\begin{equation}\label{E:gamma*}
	\big\<\gamma(v)e_1,e_2\big\>_E\ + \ \big\<e_1,\gamma(v)e_2\big\>_E
	\ = \ 0, \qquad e_1,e_2\in E, \ v\in TM;
\end{equation}

\item 
A connection $\n^E$ which preserves the grading  and satisfies the Leibniz rule 
\begin{equation}\label{E:Leibniz}
	\n^E_u\,\big(\gamma(v)e\big)\ = \ \gamma(\n^{LC}_uv)\,e\ + \ 
	\gamma(v)\,\n^E_uv,
	\qquad u,v\in TM, e\in E, 
\end{equation}
where $\n^{LC}$ is the Levi-Civita connection of the Lorenzian metric $\<\cdot,\cdot\>$. 
\end{enumerate} 
\end{definition}

Note that \eqref{E:gamma*} implies that
\begin{equation}\label{E:gammascalar}
	\big\<\gamma(v)e_1,\gamma(v)e_2\big\>_E\ =  \
	\<v,v\>\cdot \<e_1,e_2\>_E.
\end{equation}

If $\Sigma$ is a spin manifold, then so is $M$ and the bundle $SM$ of spinors over $M$ is naturally a Dirac bundle, cf. \cite{BarGauduchonMoroianu05}. More generally, if $W$ is a Hermitian bundle over $M$ endowed with a Hermitian connection, then $SM\otimes W$ is naturally a Dirac bundle. Any Dirac bundle locally looks like this.

\subsection{The Dirac operator}\label{SS:dirac}
Let  $D:C^\infty(M,E)\to C^\infty(M,E)$ be the Dirac operator associated to the connection $\n^{E}$. Locally, if $e_0,e_1,\ldots,e_n$ is an orthonormal frame (with respect to the Lorentzian metric \eqref{E:gM})  then 
\[
	D\ = \ \sum_{j=0}^n\,\epsilon_j\gamma_j\n^{E}_{e_j}, 
\]
where $\epsilon_j:=\<e_j,e_j\>=\pm1$. Then $D$ is odd with respect to the grading $E=E^+\oplus E^-$, i.e, has the form 
\[
	D\ = \ \begin{pmatrix}
	0&D^-\\
	D^+&0
	\end{pmatrix}.
\]

For a linear operator $L:C^\infty(M,E)\to C^\infty(M,E)$ we denote by $L^\dagger$ its formal adjoint  with respect to the indefinite inner product $\<\cdot,\cdot\>_E$. One readily sees that $D^\dagger= -D$, i.e, $(D^\pm)^\dagger= -D^\mp$.

\subsection{The restriction of $D$ to a hyperserface}\label{SS:restriction}
Let $E_t= E^+|_{\Sigma_t}$ ($0\le t\le 1$) denote the restriction of $E^+$ to $\Sigma_t$. We endow $E_t$ with the Clifford action 
\begin{equation}\label{E:gammat}
	\gamma_t(v)\ := \ i\beta\gamma(v), \qquad v\in T\Sigma_t.
\end{equation}
Then $E_t$ is an ungraded Dirac bundle over $\Sigma_t$. Let $A_t:C^\infty(\Sigma_t,E_t)\to C^\infty(\Sigma_t,E_t)$ denote the Dirac operator on $\Sigma_t$. By \cite[Eq. (3.6)]{BarGauduchonMoroianu05} (see also \cite[Eq. (3)]{BarStrohmaier15} and \cite[\S7.1-7.2]{BrMaschler17}) along $\Sigma_t$ we have
\begin{equation}\label{E:restrictiongeneral}
	D\ = \ -\beta\,\Big(\,\n^{E}_\nu+iA_t-\frac{n}{2}H_t\,\Big),
\end{equation}
where $H_t$ is the mean curvature of $\Sigma_t$ with respect to $\nu$. The reason for the term $\frac{n}{2}H_t$ is that the restriction of the Levi-Cevita connection to $\Sigma_t$ is not equal to the Levi-Civita connection of $\Sigma_t$, cf. \cite[\S7.1-7.2]{BrMaschler17} for more details.

\subsection{The Callias potential}\label{SS:Callias}

Let now $\Phi:E\to E$ be a self-adjoint bundle map. We denote by $\Phi_t$ the restriction of $\Phi$ to $E_t$. 

Consider the operator 
\begin{equation}\label{E:Callias+}
	\DD\ := \ D\ + \ i\beta\,\Phi:\,
	C^\infty(M,E)\ \to  \ C^\infty(M,E).
\end{equation}

Then along $\Sigma_t$ ($t\in [0,1]$) we have 
\begin{equation}\label{E:Calliasnear}
	\DD\ =  \ -\beta\,\Big(\,\n^{E}_\nu  + i\,\AA_t
	 - \frac{n}{2}H_t\,\Big)
\end{equation}
where 
\begin{equation}\label{E:CalliasA}
	\AA_t \ := \ A_t\ - \ \Phi_t
\end{equation}
is a Dirac-type operator on $\Sigma_t$.

\begin{definition}\label{D:Callias}
We say that $\AA_t$ is a {\em strongly Callias-type operator} if 
\begin{enumerate}
\item the ant-commutator $[\AA_t,\Phi_t]_+:= \AA_t\Phi_t+\Phi_t \AA_t$ is a zeroth order differential operator, i.e. a bundle map;
\item for any $R>0$, there exists a compact subset $K_R\subset M$ such that
\begin{equation}\label{E:strinvinfA}
	\Phi^2(x)\;-\;\big|[\AA_t,\Phi_t]_+(x)\big|\;\ge\;R
\end{equation}
for all $x\in M\setminus K_R$.  In this case, the compact set $K_R$ is called an \emph{R-essential support} of $\AA_t$.
\end{enumerate} 
\end{definition}

\begin{remark}\label{R:Psi anticommutes}
Condition (i) of Definition~\ref{D:Callias} is equivalent to the condition that $\Phi$ anticommutes with the Clifford multiplication: $\big[\gamma(v),\Phi\big]_+ = 0$, for all $\xi\in T^*M$.
\end{remark}

One readily sees that  a strongly Callias-type operator has a discrete  spectrum, \cite[\S3.10]{BrShi17}. 

\subsection{Assumption}\label{SS:assumption}
We now formulate the main assumptions under which we study the index of the operator $\DD$. 

\begin{enumerate}
\item[\textbf{(A1)}] 
There exists a compact set $K\subset \Sigma$ such that ${g^\Sigma_t}\big|_{{}_{\Sigma\backslash K}}$ is independent of $t$.
\item[\textbf{(A2)}]  
The  {\em lapse function} $N=N(x,t)$ is smooth and satisfies  $N(x,t)= 1$ for $x\not\in K$.
\item[\textbf{(A3)}] 
There is a fixed isomorphism 
\begin{equation}\label{E:isomofrestrictions}
	E|_{M\backslash\big([0,1]\times K\big)}\ \simeq \ 
	[0,1]\times E_0|_{\Sigma_0\backslash K}.
\end{equation}
Under this isomorphism the connection $\n^E$ is equal to the product of the connection on $E_0|_{\Sigma_0\backslash K}$ and the trivial connection along $[0,1]$. In other words, if we write a tangent vector to $M\backslash\big([0,1]\times K\big)\simeq [0,1]\times (\Sigma\backslash K)$ as $(a\nu,v)$ ($a\in \RR,\ v\in T\Sigma$), then 
\begin{equation}\label{E:restrictionnE}
	\n^E_{(a\nu,v)}|_{M\backslash\big([0,1]\times K\big)} \ = \
	-\frac{a}{N}\,\frac{\p}{\p t}\ + \ \n^{E_0}_v\big|_{\Sigma_0\backslash K}.
\end{equation}

\item[\textbf{(A4)}] 
The restriction of $\Phi_t$ to  $M\backslash\big([0,1]\times K\big)$ is independent of $t$

\item[\textbf{(A5)}] 
$\AA_0$ is a strongly Callias-type operator in the sense of Definition~\ref{D:Callias}.
\end{enumerate}

Note that Assumption (A3) and the Leibniz rule \eqref{E:Leibniz} imply that the restriction of the Clifford action $\gamma$ to $M\backslash\big([0,1]\times K\big)$ is independent of\/ $t\in [0,1]$. Since the inner product $\<\cdot,\cdot\>_E$ is preserved by $\n^E$, its restriction to  $M\backslash\big([0,1]\times K\big)$ is also independent of $t$.

It follows from Assumptions~(A1)-(A3) that the restriction of $\AA_t$ to $\Sigma_t\backslash\big(\{t\}\times K\big)$ is independent of $t$. Hence, Assumption~(A4) implies that $\AA_t$ is a strongly Callias-type operator for all $t\in [0,1]$.

\subsection{Restriction to $M\backslash([0,1]\times K)$}\label{SS:restrictionM-K}
We note that, by Assumptions~(A1)-(A3) of Section~\ref{SS:assumption},  the restriction of $\DD$ to $M\backslash\big([0,1]\times K\big)$ is equal to 
\begin{equation}\label{E:restrictionDD}
	\DD|_{M\backslash\big([0,1]\times K\big)}\ = \ 
	-\beta\, \Big(\,-\frac{\p}{\p t}  + i\,\AA_0\,\Big).
\end{equation}

\subsection{The scale of Sobolev spaces}\label{SS:Sobolev}

We recall the definition of Sobolev spaces $H^s_{\AA_t}(\Sigma_t,E_t)$ of sections over $\Sigma_t$ which depend on the  operator $\AA_t$, cf. \cite[\S3.13]{BrShi17}.

\begin{definition}\label{D:Sobolev}
Set
\[
	C_{\AA_t}^\infty(\Sigma_t,E_t)	\;:=\;
	\Big\{\,\bfu\in C^\infty(\Sigma_t,E_t):\,
	\big\|(\id+\AA_t^2)^{s/2}\bfu\big\|_{L^2(\Sigma_t,E_t)}^2<+\infty\mbox{ for all }s\in\RR\,\Big\}.
\]
For all $s\in\RR$ we define the \emph{Sobolev $H_{\AA_t}^s$-norm} on $C_{\AA_t}^\infty(\Sigma_t,E_t)$ by
\begin{equation}\label{E:Sobnorm}
	\|\bfu\|_{\HAs{t}}^2\;:=\;
		\big\|(\id+\AA_{t}^2)^{s/2}\bfu\big\|_{L^2(\Sigma_t,E_t)}^2.
\end{equation}
The Sobolev space $\HAs{t}$ is defined to be the completion of $C_{\AA_{t}}^\infty(\Sigma_t,E_t)$ with respect to this norm.
\end{definition}

\section{The Cauchy problem and the evolution operator}\label{S:evolution}

In this section we show that the Cauchy problem for the strongly Callias-type operator $\DD$ is well-posed in certain spaces closely related to the Sobolev spaces $\HAs{t}$. We then construct the {\em evolution operator}\/ $Q:\HAs{0}\to \HAs{1}$ for the wave equation $\DD\bfu=0$ and discuss its basic properties. This generalizes the results of 
\cite[\S2]{BarStrohmaier15} to our non-compact situation.

\subsection{The compact case}\label{SS:compact}

Fix $t_0\in [0,1]$ and let $\bfu= \E_{t_0}(\bfu_0;\bff)$ denote the solution of the non-homogeneous Cauchy problem
\begin{equation}\label{E:wave}
	\begin{aligned}
		\DD\bfu\ &= \ \bff;\\
		\bfu(t_0,x)\ &= \ \bfu_{t_0}(x). 
	\end{aligned}
\end{equation}
In particular, in the case $t_0=0$ and $\bff=0$ we define the {\em evolution operator} $Q$ as the map from sections over $\Sigma_{0}$ to sections  over $\Sigma_1$, defined by 
\[
	Q:\,\bfu_0 \to \E_0(\bfu_0;0)_{|_{\Sigma_1}}.
\] 
To make this definition rigorous one needs to define a space of sections over $M$ such that the wave equation \eqref{E:wave} has a unique solution in this space. It is also desirable to show that the operator $Q$ is continuous. 

A rigorous construction of the evolution operator in the case when $\Sigma$\/ is compact is given by B\"ar and  Strohmaier in \cite[\S2]{BarStrohmaier15}. In this paper the authors considered the collection $\HAs{t}$ as an infinite dimensional vector bundle over $[0,1]$ and defined the space 
$FE^s(M,D)$ of {\em finite $s$-energy sections} to be the completion of the space of continuous sections of this bundle with respect to  a certain norm. 

Let
\begin{equation}\label{E:BSsolution}
	FE^s(M,\ker D)\ := \ 
	\big\{\,\bfu:\,  \bfu(t,\cdot)\in \HAs{t},\, D\bfu=0\,\big\}.
\end{equation}
(note that, if $\Sigma$ is compact, then $\HAs{t}$ coincides with  the usual Sobolev space $H^s(\Sigma_t,E_t)$). 

B\"ar and  Strohmaier proved the following version of well-posedness  of the Cauchy problem for the wave equation: if $\Sigma$ is compact, then the restriction map
\begin{equation}\label{E:BSres}
	\res_t:\, FE^s(M,\ker D) \ \to \ \HAs{t}
\end{equation}
is an isomorphism of topological vector spaces. This allows to define the evolution operator as 
\begin{equation}\label{E:BSevolution}
	Q\ := \ \res_1\circ \res_0^{-1}:\,
	\HAs{0}\ \to \ 
	\HAs{1}.
\end{equation}

\subsection{Sketch of the construction in non-compact situation}\label{SS:sketchevolution}

Let us first briefly explain where our construction of $Q$ differs from the compact case and sketch our strategy of treating the problems at ``infinity".

To extend the  argument  of the previous subsection to our non-compact situation we consider the collection of spaces $\HAs{t}$ as an infinite dimensional bundle over $[0,1]$ and define the space $FE^s(M,\DD)$ to be  a completion of the space of continuous sections of this bundle. We define the space $FE^s(M,\ker\DD)$ as in \eqref{E:BSsolution}. The main result of this section is the {\em well-posedness of the Cauchy problem \eqref{E:wave} in these spaces}, which means that, for each $t_0\in [0,1]$,  the map 
\begin{equation}\label{E:sketchwell}
	\res_{t_0}\oplus \DD:\,FE^s(M,\DD)\ \to \ 
	\HAs{t_0}\oplus L^2\big(\,[0,1],\HAs{t}\,\big)
\end{equation}
is an isomorphism of Banach spaces. In particular, 
\begin{equation}\label{E:res}
	\res_{t_0}:\, FE^s(M,\ker\DD) \ \to \ \HAs{t_0}
\end{equation}
is an isomorphism of Banach spaces and we can define the evolution operator by \eqref{E:BSevolution}. 

To prove that \eqref{E:sketchwell} is an isomorphism, we construct the inverse $\E_{t_0}$ of this map. So $\E_{t_0}(\bfu_{t_0};\bff)$ is the solution of \eqref{E:wave}.

Consider a large compact set $K'\subset M$. In particular, we assume that $K'$ contains the set $K$ defined in (A1) of Section~\ref{SS:assumption}. Let $\phi$ be a compactly supported function whose restriction to $K'$ is equal to 1. Fix $t_0\in [0,1]$. Given $\bfu_{t_0}\in \HAs{t}$, $\bff\in L^2\big(\,[0,1],\HAs{t}\,\big)$,  we decompose them as $\bfu_{t_0}= \phi\bfu_{t_0}+(1-\phi)\bfu_{t_0}$,  $\bff= \phi\bfu+(1-\phi)\bfu$ and construct separately $\E_{t_0}(\phi\bfu_{t_0},\phi\bff)$ and $\E_{t_0}((1-\phi)\bfu_{t_0},(1-\phi)\bff)$.

Since $\phi\bfu_{t_0}$ and $\phi\bff$ are compactly supported it follows easily from the finite propagation speed of the solutions of the wave equation and a result of \cite[\S2]{BarStrohmaier15} that there is a unique compactly supported section $\bfu'= \E_{t_0}(\phi\bfu_{t_0},\phi\bff)\in FE^s(M,\DD)$ such that $\res_{t_0}(\bfu')= \phi\bfu_{t_0}$ and $\DD\bfu'= \phi\bff$. 

The supports of $(1-\phi)\bfu_{t_0}$ and $(1-\phi)\bff$ are outside of $K$. 
Suppose 
\[
	\bfu''\ = \
	 \E_{t_0}\big((1-\phi)\bfu_{t_0},(1-\phi)\bff\big)\in FE^s(M,\DD)
\] 
is a solution of the wave equation. If the set $K'$ is large enough, then, from  the finite propagation speed property, we conclude that $\bfu''$ is supported outside of the set $[0,1]\times K$, where $K$ is the compact set defined in (A1) of Section~\ref{SS:assumption}. Recall that all our structures are product outside of $[0,1]\times K$. Using this fact, one checks that
\[
	\bfu''(t,x) \ = \ e^{i(t-t_0)\AA_0}(1-\phi)\bfu_{t_0}(x)
	\ - \ \beta\,
	\int_{t_0}^t\,e^{i\AA_0(t-s)}\,(1-\phi)\bff(s)\,ds.
\]

We now define the extension map 
\begin{equation}\label{E:sketchextension}
		\E_{t_0}:\HAs{t}\oplus L^2\big(\,[0,1],\HAs{t}\,\big)
		\ \to \ FE^s(M,\DD)
\end{equation}
by $\E_{t_0}(\bfu_t;\bff)= \bfu'+\bfu''$. One easily checks that this map is independent of the choice of the cut-off function $\phi$. We prove that this map is the inverse of  $\res_{t_0}\oplus\DD$ and, hence, is  an isomorphism of Banach spaces. The evolution operator is defined by \eqref{E:BSres}.

\subsection{The propagation speed}\label{SS:propagationspeed}

Many of the arguments in this paper are based on the following {\em finite propagation speed} property of the wave equation, cf., for example, \cite[Theorem~7.9]{Alinhac09}.

\begin{definition}\label{D:minmetric}
Define a new norm  on the tangent space $T\Sigma$ to $\Sigma$ by 
\begin{equation}\label{E:gmin}
	\|u\|_{\max}^2\ := \ 
	\frac{\displaystyle\max_{0\le t\le 1} g^\Sigma_t(u,u)}
	{\displaystyle\min_{(t,x)\in M} N(t,x)}, 
	\qquad u\in T_x\Sigma, 	
\end{equation}
where $N(t,x)$ is the lapse function, cf. \eqref{E:gM}.
For $x,y\in \Sigma$ we denote by $\dist(x,y)$ the distance between $x$ and $y$ defined by the Finsler  metric associated to this norm. 
\end{definition}
\begin{proposition}[Finite propagation speed]\label{P:finiteps}
Suppose $s>1/2$ and $u\in H^s_{loc}(M,E)$ is a solution of the equation $\DD \bfu=\bff$. For\/ $t_0\in [0,1]$, we have 
\begin{equation}\label{E:finiteps}
	\supp \bfu \ \subset \ 
	\big\{\, (t,x):\, 
	\dist\big(\,x,\supp \bfu_{t_0}\cup \supp \bff\,\big
		)\le |t-t_0|\, \big\}.
\end{equation}
\end{proposition}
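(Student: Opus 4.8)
The plan is to reduce the statement to the standard finite propagation speed theorem for symmetric hyperbolic systems (as in \cite[Theorem~7.9]{Alinhac09}) by exhibiting $\DD$ as such a system with propagation speed controlled by the Finsler norm $\|\cdot\|_{\max}$ of Definition~\ref{D:minmetric}. First I would use formula \eqref{E:Calliasnear}, which along each $\Sigma_t$ writes
\[
	\DD\ = \ -\beta\Big(\n^E_\nu + i\AA_t - \tfrac n2 H_t\Big),
\]
and recall $\nu = -\tfrac1N\tfrac{\p}{\p t}$, so that, after multiplying by $-\beta$ on the left (which is invertible, $\beta^2=1$), the equation $\DD\bfu=\bff$ becomes an evolution equation of the form
\[
	\frac{\p\bfu}{\p t}\ = \ \tfrac1N\big(-iN\AA_t + \tfrac n2 NH_t\big)\bfu\ - \ \beta\,\bff,
\]
whose principal part in the spatial directions is $-\tfrac iN$ times the first-order part of $N\AA_t$, i.e. essentially the Clifford symbol $\gamma_t(\xi)$ rescaled by $1/N$. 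The point is that this system is symmetric hyperbolic with respect to the positive definite product $(\cdot,\cdot)_E$: the spatial principal symbol is (up to the factor $1/N$) the skew-adjoint operator $iA_t$ associated to the Clifford action $\gamma_t(v)=i\beta\gamma(v)$, and by \eqref{E:gammascalar} the operator norm of its symbol in direction $\xi\in T^*_x\Sigma$ is $|\xi|_{g^\Sigma_t}$.

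Next I would extract the propagation speed. The characteristic cone of the above system at a point $(t,x)$ is governed by the condition $|\xi|_{g^\Sigma_t} \le N(t,x)^{-1}\cdot(\text{time component})$; taking the supremum over $t\in[0,1]$ of $g^\Sigma_t$ and the infimum over $M$ of $N$ gives exactly the norm $\|\cdot\|_{\max}$ in \eqref{E:gmin}, hence the associated Finsler distance $\dist(x,y)$ bounds the spatial displacement achievable in time $|t-t_0|$. Concretely, I would run Alinhac's energy-estimate argument on backward characteristic cones (or "lens-shaped" regions) for $\DD$: given a point $(t_1,x_1)$ with $\dist(x_1,\supp\bfu_{t_0}\cup\supp\bff) > |t_1-t_0|$, form the truncated cone
\[
	\mathcal C\ := \ \big\{(t,x):\ \dist(x,x_1)\le |t-t_1|,\ t \text{ between } t_0 \text{ and } t_1\big\},
\]
which is compact in $M$ since $g^\Sigma_t$ is complete and $N\ge \mathrm{const}>0$; on $\mathcal C$ the lateral boundary is spacelike for the $\|\cdot\|_{\max}$-metric, so the $L^2$-energy $\int_{\{t\}\times\Sigma\cap\mathcal C}(\bfu,\bfu)_E$ satisfies a Grönwall inequality with no boundary term, forcing $\bfu\equiv 0$ on $\mathcal C$ when $\bfu_{t_0}$ and $\bff$ vanish there. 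Since $(t_1,x_1)$ was arbitrary outside the right-hand side of \eqref{E:finiteps}, this gives the claimed support inclusion. The regularity hypothesis $s>1/2$, $\bfu\in H^s_{\loc}$ is exactly what is needed to justify the integration by parts in the energy identity (a density/mollification argument reduces to smooth $\bfu$).

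The main obstacle, and the only place where non-compactness of $\Sigma$ genuinely enters, is ensuring that the truncated cone $\mathcal C$ is compact and that the energy estimate is legitimate globally, rather than being an a~priori local statement that could fail to propagate because of escape to infinity. This is handled by the completeness of each $g^\Sigma_t$ together with Assumption (A1)–(A2): outside $K$ the metric and lapse are $t$-independent, so $\|\cdot\|_{\max}$ agrees with a fixed complete metric there, the $\dist$-balls are compact, and finitely many applications of the local energy estimate (covering $\mathcal C$, which lies within a fixed $\dist$-ball around $x_1$) patch together without loss. A secondary technical point is that $\AA_t$ may have unbounded zeroth-order part coming from the Callias potential $\Phi_t$; but $\Phi_t$ is a bundle map, so it contributes only to the zeroth-order term in the evolution equation and affects the Grönwall constant, not the characteristic cone, hence not the propagation speed. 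Apart from these points the argument is the standard symmetric-hyperbolic one and I would keep the write-up short, citing \cite{Alinhac09} for the core estimate.
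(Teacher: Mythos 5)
Your proof is correct and is essentially the argument the paper has in mind: the paper itself does not prove Proposition~\ref{P:finiteps} but defers to the standard energy-estimate proof of finite propagation speed for symmetric hyperbolic systems (citing Alinhac), and your sketch correctly spells out how $\DD$ fits that framework via \eqref{E:Calliasnear}, how the Finsler norm of Definition~\ref{D:minmetric} is meant to control the cones, and how completeness of $g^\Sigma_t$ keeps the truncated cones compact so the local estimate globalizes. One small sharpening is available: the Callias potential $\Phi_t$ does not merely ``affect the Gr\"onwall constant but not the propagation speed'' --- since $\Phi_t$ is self-adjoint for the positive product $(\cdot,\cdot)_E$, the term $i\Phi_t$ appearing in the evolution equation is pointwise skew-adjoint, so $\RE\big(i\Phi_t\bfu,\bfu\big)_E$ vanishes identically and $\Phi$ drops out of the energy identity altogether; hence its unboundedness at infinity is harmless even before one invokes compactness of the truncated cone.
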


\subsection{The space of finite $s$-energy sections}\label{SS:finiteenergy} 

We view the family 
\[
	\HAs{t}\ :=\ \HAsp{t}, \qquad t\in [0,1]),
\] 
as an infinite dimensional vector bundle over $[0,1]$. The space of continuous sections of this bundle is called the space of {\em  finite $s$-energy sections}  and is denoted by $FE^s(M,E)$. We endow $FE^s(M,E)$ with the norm 
\begin{equation}\label{E:normFEs}
	\|\bfu\|_{FE^s}\ := \ \max_{t\in[0,1]}\,\|\bfu(t)\|_{\HAs{t}}.
\end{equation}
We also consider the space of $L^2\big([0,1],\HAs{t}\big)$ of sections of $\HAs{t}$ with finite $L^2$-norm
\begin{equation}\label{E:normFEsL2}
	\|\bfu\|^2_{L^2,\HAs{t}}\ := \ 
	\int_0^1\,\big\|(N\bfu)|_{\Sigma_t}\big\|^2\,dt.
\end{equation}

The operator $\DD$ acts on $FE^s(M,E)$ by (cf. \eqref{E:Calliasnear}) 
\begin{equation}\label{E:DDonFE}
	\DD\bfu\ := \  -\beta\,\Big(\,
	-\frac1N\,\frac{d\bfu}{dt}  + i\,\AA_t\,\bfu(t)
	 - \frac{n}{2}H_t\,\bfu(t)\,\Big)	
\end{equation}

We define the space $FE^s(M,\DD)$ as the completion of $FE^s(M,E)$ with respect to the norm
\begin{equation}\label{E:normFED}
	\|\bfu\|_{FE^s,\DD}^2\ := \  
	\|\bfu\|_{FE^s}^2\ +\ \|\DD\bfu\|^2_{L^2,FE^s}.
\end{equation}
Notice, that for each $t\in [0,1]$ there is a well-defined continuous map
\begin{equation}\label{E:rest}
	\res_t:\,FE^s(M,\DD) \to \ \HAs{t}.
\end{equation}
We set
\begin{equation}\label{E:FEkerD}
	FE^s(M,\ker \DD)\ := \ 
	\big\{\,\bfu\in FE^s(M,\DD):\, \DD\bfu=0\,\big\}.
\end{equation}

\subsection{Well-posedness of the inhomogeneous Cauchy problem}\label{SS:wellposed}
The following theorem extends  Theorem~2.1 of \cite{BarStrohmaier15} to our non-compact situation:

\begin{theorem}\label{T:restriction}
For each $t_0\in [0,1]$ the restriction map 
\begin{equation}\label{E:Trestriction}
	\res_{t_0}\oplus \DD:\, FE^s(M,\DD)\ \to \ 
	\HAs{t_0}\oplus L^2\big([0,1],\HAs{t}\big),
\end{equation}
is an isomorphism of Banach spaces. 
\end{theorem}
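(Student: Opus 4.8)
The plan is to follow the strategy already sketched in Section~\ref{SS:sketchevolution}, namely to construct an explicit two-sided inverse $\E_{t_0}$ of the map $\res_{t_0}\oplus\DD$ by splitting data into a compactly supported piece and a piece supported in the product region $M\backslash([0,1]\times K)$, solving each separately, and gluing via a cut-off function. First I would fix a compactly supported $\phi$ equal to $1$ on a large compact set $K'\supset K$ and, given $(\bfu_{t_0},\bff)\in\HAs{t_0}\oplus L^2([0,1],\HAs{t})$, write $\bfu_{t_0}=\phi\bfu_{t_0}+(1-\phi)\bfu_{t_0}$ and $\bff=\phi\bff+(1-\phi)\bff$. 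For the compactly supported data $(\phi\bfu_{t_0},\phi\bff)$, one approximates by smooth compactly supported data, invokes Theorem~2.1 of \cite{BarStrohmaier15} on an auxiliary \emph{spatially compact} globally hyperbolic manifold obtained by gluing a compact cap to a large neighborhood of $\supp\phi$ (the metric and bundle being product outside $[0,1]\times K$ makes this gluing unobstructed), and uses the finite propagation speed Proposition~\ref{P:finiteps}: if $K'$ is chosen large enough that the $1$-neighborhood (in the $\dist$ of Definition~\ref{D:minmetric}) of $\supp\phi$ still lies in $K'$, the solution $\bfu'$ stays compactly supported, so it does not see the compactification and is a genuine solution on $M$ with $\res_{t_0}\bfu'=\phi\bfu_{t_0}$, $\DD\bfu'=\phi\bff$, and it lies in $FE^s(M,\DD)$ with norm controlled by that of the data.

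Second I would handle the data supported outside $[0,1]\times K$. Here \eqref{E:restrictionDD} says $\DD=-\beta(-\partial_t+i\AA_0)$ on the product region, and $\AA_0$ is a self-adjoint strongly Callias-type operator on $\Sigma_0$ (Assumption (A5)), hence essentially self-adjoint with discrete spectrum, so $e^{it\AA_0}$ is a strongly continuous unitary group that, crucially, preserves each $\HAs{0}=\HAsp{0}$-scale (the Sobolev norms are defined via $\AA_0$ itself, and on the product region $\HAs{t}=\HAs{0}$ canonically by Assumption (A3)). I would then \emph{define}
\[
	\bfu''(t,x)\ = \ e^{i(t-t_0)\AA_0}(1-\phi)\bfu_{t_0}(x)
	\ - \ \beta\int_{t_0}^t e^{i\AA_0(t-s)}(1-\phi)\bff(s)\,ds,
\]
check by the (Duhamel) variation-of-constants formula that $\DD\bfu''=(1-\phi)\bff$ and $\res_{t_0}\bfu''=(1-\phi)\bfu_{t_0}$, and verify the $FE^s(M,\DD)$-bounds: unitarity of $e^{it\AA_0}$ on each Sobolev scale gives $\|\bfu''(t)\|_{\HAs{t}}\le\|(1-\phi)\bfu_{t_0}\|_{\HAs{t_0}}+\int_0^1\|(1-\phi)\bff(s)\|\,ds$, and the $L^2$-in-$t$ bound on $\DD\bfu''$ is immediate since $\DD\bfu''=(1-\phi)\bff$. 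Finite propagation speed again guarantees $\bfu''$ stays in the region where this formula is a genuine solution, provided $K'$ was taken large enough relative to $\supp\phi$.

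Third, set $\E_{t_0}(\bfu_{t_0};\bff):=\bfu'+\bfu''$. I would check it is independent of the choice of $\phi$ (the difference of two choices has compactly supported data in the product region, where both the compact-manifold solution and the exponential formula give the same unique solution by finite propagation speed and uniqueness on the compact model), that it is bounded $\HAs{t_0}\oplus L^2([0,1],\HAs{t})\to FE^s(M,\DD)$, and that it is a genuine two-sided inverse of $\res_{t_0}\oplus\DD$: the composition $(\res_{t_0}\oplus\DD)\circ\E_{t_0}$ is the identity by construction, and $\E_{t_0}\circ(\res_{t_0}\oplus\DD)=\id$ follows from uniqueness of solutions of the Cauchy problem, which in turn follows from finite propagation speed together with uniqueness on the compact model and for the exponential formula (a density argument from $C^\infty$ to $FE^s$ closes this). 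The open mapping theorem then upgrades the algebraic inverse to a topological one, though the explicit norm bounds already give continuity directly.

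The main obstacle I expect is not the gluing bookkeeping but making rigorous the claim that on the product end the finite-$s$-energy solution must \emph{actually} coincide with the exponential formula rather than merely solve the equation — i.e., a uniqueness statement for $\DD\bfu=0$ in the class $FE^s$ on the non-compact product cylinder. One has to rule out solutions that are non-zero at infinity; this is where the structure of $\AA_0$ as a \emph{strongly} Callias operator (discreteness of spectrum, so $L^2$ eigenfunction expansion is available, and no continuous spectrum creating spurious generalized modes) does the work, but one must still be careful that a finite-$s$-energy section has an honest spectral expansion $\bfu(t)=\sum_\lambda c_\lambda(t)\psi_\lambda$ with $c_\lambda'(t)=i\lambda c_\lambda(t)$ forced mode-by-mode, after localizing to the product region by a further cut-off and absorbing the commutator error via finite propagation speed. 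Equivalently, an energy estimate for $\DD$ on the product cylinder — which is really just conservation of the $L^2$-norm under $e^{it\AA_0}$, differentiated in $t$ — gives uniqueness; making that estimate interact cleanly with the cut-offs and the curvature/mean-curvature terms of \eqref{E:DDonFE} in the transition region $[0,1]\times(K'\setminus K)$ is the delicate point, but it is exactly the kind of estimate \cite[\S2]{BarStrohmaier15} establishes on the compact model, so it transfers.
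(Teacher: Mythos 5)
Your proposal follows essentially the same route as the paper: split the data with a cut-off $\phi$ into a compactly supported piece and a piece supported in the product end, solve the compactly supported piece by embedding into an auxiliary spatially compact problem and invoking \cite[Theorem~2.1]{BarStrohmaier15} together with finite propagation speed, solve the far piece by the Duhamel/exponential formula for $e^{it\AA_0}$ on the product region, glue the two solutions to obtain a bounded extension map $\E_{t_0}$, and deduce continuity of the inverse from the open/bounded-inverse theorem. These are precisely the paper's Lemmas~\ref{L:compactsupport} and~\ref{L:nearinfty} and the short argument in Section~\ref{SS:prrestriction}.

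The one place you genuinely diverge is the uniqueness of solutions in $FE^s$, which you correctly flag as the delicate point and propose to settle by an energy estimate (equivalently, a mode-by-mode spectral expansion) on the product cylinder, patched to the compact model via cut-offs and finite propagation speed. The paper disposes of uniqueness much more cheaply: having written $\DD\bfu=\bff$ in the evolution form~\eqref{E:DDonFE}, it simply cites the abstract uniqueness theorem for linear ODEs in Banach spaces, \cite[Theorem~1]{Kato56}, once in the proof of Lemma~\ref{L:compactsupport} and once in Lemma~\ref{L:nearinfty}. That abstract theorem requires no spectral decomposition of $\AA_t$ and so entirely sidesteps the difficulties you anticipate about the expansion not being mode-by-mode over $[0,1]\times K$ (where $\AA_t$ depends on $t$) and about commutator errors from the cut-offs and the mean-curvature term. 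Your energy-estimate route can be made to work, but Kato's theorem is the cleaner and shorter tool, and it is what the paper uses. One small imprecision to fix in your write-up: the nesting of cut-offs should be phrased so that $\dist(K,M\backslash K')>2$ (not $K'$ large relative to $\supp\phi$, which is backwards since $\supp\phi\supset K'$); the paper's conditions~\eqref{E:disttoK}--\eqref{E:K'''} fix the bookkeeping.
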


The proof of the theorem occupies Sections~\ref{SS:cutoff}-\ref{SS:prrestriction}. First, we mention the following direct corollary:

\begin{corollary}\label{C:restriction}
For each $t_0\in [0,1]$ the restriction map 
\begin{equation}\label{E:Crestriction}
	\res_{t_0}:\, FE^s(M,\ker\DD)\ \to \  \HAs{t_0},
\end{equation}
is an isomorphism of Banach spaces. 
\end{corollary}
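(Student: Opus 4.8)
The plan is to deduce the corollary directly from Theorem~\ref{T:restriction} by restricting the isomorphism there to the kernel of the second component. First I would observe that, by definition \eqref{E:FEkerD}, the space $FE^s(M,\ker\DD)$ is precisely the preimage under the map \eqref{E:Trestriction} of the closed subspace $\HAs{t_0}\oplus\{0\}\subset \HAs{t_0}\oplus L^2\big([0,1],\HAs{t}\big)$. Since $\res_{t_0}\oplus\DD$ is an isomorphism of Banach spaces by Theorem~\ref{T:restriction}, it restricts to a bounded bijection from this preimage onto $\HAs{t_0}\oplus\{0\}$, whose bounded inverse is the restriction of $\E_{t_0}$; identifying $\HAs{t_0}\oplus\{0\}$ with $\HAs{t_0}$ in the obvious way, this restricted map is exactly $\res_{t_0}$ from \eqref{E:Crestriction}. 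Concretely, the inverse of \eqref{E:Crestriction} is $\bfu_{t_0}\mapsto \E_{t_0}(\bfu_{t_0};0)$.

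The two things that need to be checked are that $FE^s(M,\ker\DD)$ is a closed subspace of $FE^s(M,\DD)$ (so that it is itself a Banach space in the inherited norm) and that the bijection just described is bicontinuous. Closedness is immediate: $\DD\colon FE^s(M,\DD)\to L^2\big([0,1],\HAs{t}\big)$ is continuous by the very definition of the norm \eqref{E:normFED}, and $FE^s(M,\ker\DD)=\ker\DD$ is the preimage of $\{0\}$ under a continuous map. Bicontinuity then follows automatically: $\res_{t_0}$ restricted to the closed subspace is continuous, it is a bijection onto the Banach space $\HAs{t_0}$, and by the open mapping theorem (or simply by noting that its inverse is the restriction of the continuous map $\E_{t_0}$ furnished by Theorem~\ref{T:restriction}) the inverse is continuous as well.

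There is really no substantial obstacle here — the entire analytic content is already contained in Theorem~\ref{T:restriction}, and the corollary is a formal consequence of the elementary fact that an isomorphism of Banach spaces carries a closed subspace isomorphically onto its image. The only point requiring a word of care is the identification of the restricted map with $\res_{t_0}$ as defined in \eqref{E:rest}, i.e.\ that the $\DD$-component vanishing is exactly the defining condition \eqref{E:FEkerD} of $FE^s(M,\ker\DD)$; this is true by construction. Hence $\res_{t_0}$ in \eqref{E:Crestriction} is an isomorphism of Banach spaces, with inverse $\bfu_{t_0}\mapsto\E_{t_0}(\bfu_{t_0};0)$, and the evolution operator $Q=\res_1\circ\res_0^{-1}$ is well defined and continuous.
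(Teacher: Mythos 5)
Your proposal is correct and is exactly the argument the paper has in mind: the paper simply calls Corollary~\ref{C:restriction} a ``direct corollary'' of Theorem~\ref{T:restriction}, and your restriction-to-a-closed-subspace argument (with inverse $\bfu_{t_0}\mapsto\E_{t_0}(\bfu_{t_0};0)$) is the standard way to make that explicit.
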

\subsection{Cut-off functions}\label{SS:cutoff}
Let $K$ be the compact set defined in (A1) of Section~\ref{SS:assumption}. Let $K'\Supset K$ be a compact subset of $\Sigma$ such that 
\begin{equation}\label{E:disttoK}
	\dist(K,M\backslash K') \ > \ 2. 
\end{equation}

Let $\phi:\Sigma\to [0,1]$ be a smooth compactly supported function such that $\phi|_{K'}=1$. 

We choose a compact set  $K''\Supset \supp\phi$ such that 
\begin{equation}\label{E:disttosuppphi}
	\dist(\supp\phi,M\backslash K'') \ > \ 2. 
\end{equation}
Let $\psi:\Sigma\to [0,1]$ be a smooth compactly  supported function with $\psi|_{K''}=1$.

Finally we choose a compact set $K'''\subset \Sigma$ such that
\begin{equation}\label{E:K'''}
		\dist(\supp\psi,M\backslash K''') \ > \ 2.
\end{equation}

Clearly, 
\begin{equation}\label{E:phi+1-phi}
	\E_{t_0}(\bfu_{t_0},\bff) \ = \ \E_{t_0}(\phi\bfu_{t_0},\phi\bff)
	\ + \ \E_{t_0}\big((1-\phi)\bfu_{t_0},(1-\phi)\bff\big).
\end{equation}
We  construct each of the summands in the right hand side separately. 

\subsection{The case of compact support}\label{SS:extcomp}
Let $K'$, $K''$, and $K'''$ be as in the previous subsection.

\begin{lemma}\label{L:compactsupport}
Let $\bfv_{t_0}\in \HAs{t}$, $\bfg\in L^2\big([0,1],\HAs{t}\big)$ be sections with  support in $K''$.  Then there is a unique solution $\bfv=\E_{t_0}(\bfv_{t_0},\bfg)\in FE^s(M,\DD)$ of the wave equation 
\begin{equation}\label{E:wavecompact}
		\DD\bfv\ = \ \bfg;\qquad
		\bfv(t_0,x)\ = \ \bfv_{t_0}(x). 
\end{equation}
Moreover, $\supp \E_{t_0}(\bfv_{t_0},\bfg)\subset [0,1]\times K'''$.
\end{lemma}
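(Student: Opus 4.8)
The plan is to reduce Lemma~\ref{L:compactsupport} to the compact case treated by B\"ar and Strohmaier in \cite[\S2]{BarStrohmaier15} by spatially truncating the manifold. First I would fix the compact sets $K'' \Supset \supp\phi$ and $K''' \supset \Sigma$ as in Section~\ref{SS:cutoff}, so that $\dist(\supp\psi, M\setminus K''') > 2$ and, more to the point, $\dist(K'', M\setminus K''') > 2$ (which holds because $\supp\psi \supset K''$). Choose a compact manifold $\hat\Sigma$ (with or without boundary, it does not matter for the argument since we only use finite propagation speed) containing an isometric copy of a neighborhood of $K'''$ in $\Sigma$, together with a copy of the Dirac bundle $E$, the Callias potential $\Phi$, and the lapse function $N$ over $\hat M := [0,1]\times\hat\Sigma$, agreeing with the originals on $[0,1]\times K'''$; outside $K'''$ we may simply cap off arbitrarily since the data there will play no role. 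On $\hat M$ the operator $\hat\DD$ is of the same type, $\hat\Sigma$ is compact, so by \cite[Theorem~2.1]{BarStrohmaier15} (our Theorem~\ref{T:restriction} is exactly its non-compact extension, and in the compact case reduces to it) the Cauchy problem $\hat\DD\hat\bfv = \hat\bfg$, $\hat\bfv(t_0,\cdot) = \hat\bfv_{t_0}$ has a unique solution $\hat\bfv \in FE^s(\hat M, \hat\DD)$, where $\hat\bfv_{t_0}, \hat\bfg$ are $\bfv_{t_0}, \bfg$ regarded as sections over $\hat\Sigma$ (extension by zero, legitimate since they are supported in $K''$).

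Next I would apply the finite propagation speed property, Proposition~\ref{P:finiteps}, on $\hat M$: since $\supp\hat\bfv_{t_0} \cup \supp\hat\bfg \subset K''$ and $|t - t_0| \le 1$, the support of $\hat\bfv$ is contained in $\{(t,x) : \dist(x, K'') \le 1\}$, which by \eqref{E:disttosuppphi}–\eqref{E:K'''} and the triangle inequality lies well inside $[0,1]\times K'''$, in fact in the region where $\hat M$ and $M$ are isometrically identified. Therefore $\hat\bfv$ transports back to a section $\bfv := \E_{t_0}(\bfv_{t_0},\bfg)$ on $M$, supported in $[0,1]\times K'''$, which solves \eqref{E:wavecompact}: the equation $\DD\bfv = \bfg$ holds on $[0,1]\times K'''$ by the isometric identification and holds trivially (both sides zero) off that set. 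One checks $\bfv \in FE^s(M,\DD)$ directly from the definitions \eqref{E:normFEs}–\eqref{E:normFED}, since $\bfv$ is compactly supported and its $FE^s$ and $L^2$-energy norms on $M$ equal those of $\hat\bfv$ on $\hat M$.

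For uniqueness, suppose $\bfv_1, \bfv_2 \in FE^s(M,\DD)$ both solve \eqref{E:wavecompact}; then $\bfw := \bfv_1 - \bfv_2$ solves $\DD\bfw = 0$ with $\res_{t_0}\bfw = 0$. A priori $\bfw$ need not be compactly supported, so I cannot immediately invoke the compact uniqueness statement. Instead I would first apply Proposition~\ref{P:finiteps} with zero Cauchy data and zero right-hand side: this forces $\supp\bfw \subset \{(t,x) : \dist(x, \emptyset) \le |t-t_0|\} = \emptyset$, hence $\bfw = 0$. (Here the interpretation $\dist(x,\emptyset) = +\infty$ makes the support empty; if one prefers to avoid that convention, approximate $\bfw$ in $FE^s(M,\DD)$ and localize, or note that the finite-energy condition plus $\DD\bfw=0$, $\res_{t_0}\bfw=0$ already yields $\bfw=0$ in $\HAs{t}$ for all $t$ near $t_0$ by the compact-case well-posedness applied on exhausting truncations, then continue.) The main obstacle I anticipate is precisely this uniqueness point — making sure finite propagation speed is available with the regularity $s > 1/2$ assumed in Proposition~\ref{P:finiteps} and that elements of $FE^s(M,\DD)$ are genuinely $H^s_{\loc}$ solutions so the proposition applies; this is a matter of unwinding Definition~\ref{SS:finiteenergy} and the trace/restriction continuity \eqref{E:rest}, but it is the step requiring care rather than the essentially bookkeeping truncation argument.
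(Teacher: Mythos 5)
Your existence argument is essentially the paper's: truncate to a compact spatial section $\hat\Sigma$ containing $K'''$, solve via \cite[Theorem~2.1]{BarStrohmaier15}, use finite propagation speed (Proposition~\ref{P:finiteps}) on $\hat M$ to confine the solution to $[0,1]\times K'''$ where $M$ and $\hat M$ are identified, and transport back. One error, though: the parenthetical remark that $\hat\Sigma$ may be taken ``with or without boundary, it does not matter'' is wrong. You rely on \cite[Theorem~2.1]{BarStrohmaier15} for well-posedness on $\hat M$, and that theorem is for a \emph{closed} Cauchy slice; a boundary on $\hat\Sigma$ would force you to impose conditions on $[0,1]\times\partial\hat\Sigma$ and the quoted theorem no longer applies. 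Take $\hat\Sigma$ closed, as the paper does.

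Your uniqueness argument departs from the paper's and has a genuine gap. The paper simply invokes Kato's abstract uniqueness theorem for linear evolution equations in Banach spaces, \cite[Theorem~1]{Kato56}, applied to the ODE form \eqref{E:DDonFE} of the equation; that works for every $s$, with no regularity threshold and no need to control the support of the difference $\bfw=\bfv_1-\bfv_2$. You instead try to run finite propagation speed on $\bfw$. Proposition~\ref{P:finiteps} is stated for $s>1/2$ and for $\bfw\in H^s_{\loc}(M,E)$, and you correctly flag that it is not automatic that an element of $FE^s(M,\DD)$ qualifies. But neither of your suggested patches is actually carried out, and neither obviously works as stated: approximating $\bfw$ in $FE^s(M,\DD)$ destroys the exact relation $\DD\bfw=0$ that the proposition needs, and ``compact-case well-posedness on exhausting truncations'' must address the fact that $\bfw$ is not a priori compactly supported, so each truncated Cauchy problem involves a \emph{modified} operator near the artificial boundary and does not directly control $\bfw$ on all of $\Sigma$. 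Replace this with the appeal to \cite[Theorem~1]{Kato56}; it is the clean route and is exactly what the paper does.
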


\begin{proof}
Choose a compact manifold $\tilde{\Sigma}$ which contains $K'''$ and consider the product $\tilde{M}:= [0,1]\times\tilde{\Sigma}$. Let $\widetilde{\<\cdot,\cdot\>}$ be a Lorenzian metric on $\tilde{M}$ whose restriction to $[0,1]\times K''$ coincides with $\<\cdot,\cdot\>$. Let $\tilde{E}$ be a Dirac bundle over $\tilde{M}$ whose restriction to $[0,1]\times K''$ coincides with $E$. 
We then have a Dirac operator $\tilde{D}$ on $\tilde{E}$, whose restriction to $[0,1]\times K''$ coincides with $D$. Fix a self-adjoint bundle map $\tilde{\Phi}:\tilde{E}\to \tilde{E}$ whose restriction to $[0,1]\times K''$ coincides with $\Phi$ and consider the operator $\tilde{\DD}:=\tilde{D}+i\beta\tilde{\Phi}$.  Let $FE^s(\tilde{M},\tilde{\DD})$ denote the space of finite $s$-energy sections on $\tilde{M}$. From Theorem~2.1 of \cite{BarStrohmaier15} we conclude that the map 
\begin{equation}\label{E:restrictiontilde}
	\widetilde{\res}_{t_0}\oplus\tilde{\DD}:\, 
	FE^s(\tilde{M},\tilde{\DD}) \ \to \ 
	H^s(\tilde{\Sigma}_{t_0},\tilde{E}_{t_0})
	\oplus
	L^2\big(\,[0,1],\HAs{t}\,\big)
\end{equation}
is an isomorphism of Banach spaces. Denote by  $\tilde{\E}_{t_0}$ its inverse. It follows from the finite propagation speed, Proposition~\ref{P:finiteps}, that $\supp \tilde{\E}_{t_0}(\bfv_{t_0},\bfg)\subset K'''$ and that $\tilde{\E}_{t_0}(\bfv_{t_0},\bfg)$ does not depend on the choices of $\tilde{\Sigma}$, $\widetilde{\<\cdot,\cdot\>}$,  $\tilde{E}$, and $\tilde{\Phi}$. 

For $X\subset \tilde{\Sigma}$ we denote
\begin{equation}\label{E:HX}
	H^s(X)\ : = \ 
	\big\{\,\bfv_{t_0}\in 
	H^s(\tilde{\Sigma}_{t_0},\tilde{E}_{t_0})
	:\, \supp\bfv_0\subset X\,\big\},
\end{equation}
and
\begin{equation}\label{E:FEX}
	FE^s(X)\ : = \ 
	\big\{\,\bfv\in FE^s(\tilde{M},\tilde{\DD})
	:\, \supp\bfv\subset [0,1]\times X\,\big\}.
\end{equation}
Then 
\begin{equation}\label{E:tilderesext1}
	\begin{aligned}
	&\widetilde{\res}_{t_0}\oplus \DD:\, FE^s(K'')\ \to \ 
	H^s(K'')\oplus L^2\big(\,[0,1],H^s(K''')\,\big),
	\\
	&\tilde{\E}_{t_0}:\, H^s(K'') \oplus 
	   L^2\big([0,1],H^s(K''')\big)\   \to \  FE^s(K'''),
	\end{aligned}
\end{equation}
and
\begin{equation}\label{E:tilderesext}
	\begin{aligned}
	\widetilde{\res}_{t_0}\circ \tilde{\E}_{t_0}\,(\bfv_{t_0},\bfg)
	\ &= \ \bfv_{t_0}, 
	\qquad \text{if} \quad \supp \bfv_{t_0}\subset K'';\ \ 
	\supp \bfg\subset K'';\\
	\tilde{\E}_{t_0}\circ 
	\big(\,\widetilde{\res}\oplus \tilde{\DD}\,\big)\,(\bfv)
	\ &= \ \bfv, 
	\qquad \text{if} \quad \supp \bfv\subset [0,1]\times K''.
  \end{aligned}
\end{equation}

We view $H^s(K'')$ as a subset of $\HAs{t_0}$ and $FE^s(K''),\ FE^s(K''')$ as subsets of $FE^s(M,\DD)$. Then
\[
	\E_{t_0}(\bfv_{t_0},\bfg)\ := \ \tilde{\E}_{t_0}(\bfv_{t_0},\bfg)
	\ \subset \ FE^s(K''')\ \subset \ FE^s(M,\DD)
\]
satisfies the wave equation \eqref{E:wavecompact}. The uniqueness of this solution is a direct consequence of the abstract uniqueness theorem for solutions of differential equations in Banach spaces, \cite[Theorem~1]{Kato56}.
\end{proof}

\subsection{Solution in a neighborhood of infinity}\label{SS:nearinfty}
Let 
\[
	\bfw_{t_0}\in \HAs{t},  \qquad
	\bfh\in L^2\big([0,1],\HAs{t}\big)
\] 
be sections with  support in $M\backslash{}K'$.

\begin{lemma}\label{L:nearinfty}
The unique solution $\bfw=\E_{t_0}(\bfv_{t_0},\bfg)\in FE^s(M,\DD)$ of the wave equation 
\begin{equation}\label{E:waveinfty}
		\DD\bfw\ = \ \bfh;\qquad
		\bfw(t_0,x)\ = \ \bfw_{t_0}(x)
\end{equation}
is given by Duhamel's formula
\begin{equation}\label{E:dbtildeE}
	\bfw \ = \ \E_{t_0}(\bfw_{t_0},\bfh) \ := \ 
	e^{i(t-t_0)\AA_0}\,\bfw_{t_0}
	\ - \ \beta\,\int_{t_0}^t\,e^{i\AA_0(t-s)}\,\bfh(s)\,ds.
\end{equation} 
Moreover, $\supp \E_{t_0}(\bfw_{t_0},\bfh)\subset [0,1]\times M\backslash{K}$.
\end{lemma}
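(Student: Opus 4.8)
The plan is to reduce the problem to the explicit solution of a first-order ODE in the Banach space $\HAs{t}$ once we know the solution is supported away from $K$, and then to invoke the uniqueness of solutions of such ODEs. First I would observe that the right-hand side of \eqref{E:dbtildeE} is a genuine element of $FE^s(M,\DD)$: since $\AA_0$ is a strongly Callias-type operator, $e^{i\theta\AA_0}$ is a strongly continuous one-parameter group of unitaries on $L^2(\Sigma_0,E_0)$ which restricts to a bounded (in fact bounded uniformly in $\theta$, by the spectral theorem applied to the self-adjoint $\AA_0$) operator on each $\HAs{0}$; because by Assumptions~(A1)--(A4) the spaces $\HAs{t}$ and $\HAs{0}$ coincide, with equivalent norms, over the region $M\backslash([0,1]\times K)$, the map $t\mapsto e^{i(t-t_0)\AA_0}\bfw_{t_0}$ is a continuous section of the bundle $\{\HAs{t}\}$, and the Duhamel integral is continuous in $t$ because $\bfh\in L^2([0,1],\HAs{t})$. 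So the formula defines $\bfw\in FE^s(M,\DD)$ with $\res_{t_0}\bfw=\bfw_{t_0}$.

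Next I would verify that this $\bfw$ actually solves $\DD\bfw=\bfh$. This is where the restriction formula \eqref{E:restrictionDD}--\eqref{E:restrictionDD} (i.e. $\DD|_{M\backslash([0,1]\times K)}=-\beta(-\p_t+i\AA_0)$) is used: differentiating \eqref{E:dbtildeE} in $t$ gives $\frac{\p\bfw}{\p t}=i\AA_0\bfw-\beta\,\bfh(t)$ (the boundary term from differentiating the integral), hence $(-\p_t+i\AA_0)\bfw=\beta\bfh$, and applying $-\beta$ and using $\beta^2=1$ yields $\DD\bfw=\bfh$. Strictly this computation is valid on the region where $\DD$ has the product form; but since $\supp\bfw_{t_0},\supp\bfh\subset M\backslash K'$ and $\dist(K,M\backslash K')>2\ge|t-t_0|$ for $t,t_0\in[0,1]$, the finite propagation speed (Proposition~\ref{P:finiteps}) forces $\supp\bfw(t,\cdot)\subset M\backslash K$ for all $t$, so $\DD\bfw$ is computed entirely in the product region and the identity $\DD\bfw=\bfh$ holds globally. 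This simultaneously proves the support statement $\supp\bfw\subset[0,1]\times(M\backslash K)$.

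Finally, for uniqueness I would argue that any $\bfw\in FE^s(M,\DD)$ solving \eqref{E:waveinfty} is, by finite propagation speed applied exactly as above, supported in $[0,1]\times(M\backslash K)$, hence satisfies the first-order linear ODE $\frac{d\bfw}{dt}=i\AA_0\bfw-\beta\,\bfh(t)$ in the Banach space $\HAs{0}$ with initial condition $\bfw(t_0)=\bfw_{t_0}$; the difference of two solutions satisfies the homogeneous equation with zero initial data, so it vanishes by the abstract uniqueness theorem for linear evolution equations in Banach spaces (\cite[Theorem~1]{Kato56}, already invoked in Lemma~\ref{L:compactsupport}). I expect the only mildly delicate point to be the bookkeeping needed to justify that $\{\HAs{t}\}$ is genuinely a trivial bundle over the complement of $K$ and that the generator $i\AA_0$ is the same on each fiber there — i.e. checking that Assumptions~(A1)--(A4) do give the clean product structure \eqref{E:restrictionDD} — together with the minor regularity issue of differentiating the Duhamel integral when $\bfh$ is only $L^2$ in $t$ (which is handled in the usual weak/distributional sense and causes no real trouble since membership in $FE^s(M,\DD)$ is defined via the graph norm).
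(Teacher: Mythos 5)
Your proposal is correct and follows essentially the same route as the paper's proof: both use the product form \eqref{E:restrictionDD} of $\DD$ outside $[0,1]\times K$, Duhamel's formula for the resulting autonomous evolution equation (the paper invokes Hille--Yosida, you the spectral theorem for the self-adjoint $\AA_0$ -- interchangeable here), finite propagation speed to confine the support to $M\backslash K$ and thereby justify that the autonomous equation is the correct one, Kato's uniqueness theorem, and the fact that $\HAs{t}$ restricted to $M\backslash K$ is independent of $t$ to conclude $\bfw\in FE^s(M,\DD)$.
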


\begin{proof}
Recall that the restriction of $\DD$ to $M\backslash{}K \supset M\backslash{}K'$ is given by \eqref{E:restrictionDD}. By Hille-Yosida theorem, \cite[Theorem~X.47a]{RiSi2}, $\AA_{t_0}$ generates a strongly continuous contraction semigroup $e^{i\AA_0t}$ and the solution of the non-homogeneous ``time independent" wave equation 
\begin{equation}\label{E:wavenotime}
	\begin{aligned}
		-\beta\, \Big(\,-\frac{\p}{\p t}  + i\,&\AA_0\,\Big)\bfw\ 
		= \ \bfh;\\
		\bfw(t_0,x)\ &= \ \bfw_{t_0}
	\end{aligned}
\end{equation} 
is given by \eqref{E:dbtildeE}. 

By the finite propagation speed property  the sections $e^{i(t-t_0)\AA_0}\,\bfw_{t_0}$ and $e^{i\AA_0(t-s)}\,\bfh$ are supported in $M\backslash{}K$. Hence, it is also a solution of \eqref{E:waveinfty}. As in the proof of Lemma~\ref{L:compactsupport}, the uniqueness of this solution follows from \cite[Theorem~1]{Kato56}. It remains to show that $\bfw\in FE^s(M,\DD)$. 

Since $e^{i(t-t_0)\AA_0}$ is a strongly continuous family of operators on $\HAs{0}$, the functions 
\begin{equation}\label{E:functions}
	\begin{aligned}
		t&\mapsto e^{i\AA_0(t-t_0)}(1-\phi)\bfu_{t_0}
		\ \in \ \HAs{0},\\
		t&\mapsto \beta\,\int_{t_0}^t\,e^{i\AA_0(t-s)}\,\bfh(s)\,ds
		\ \in \ \HAs{0}
	\end{aligned}
\end{equation}
are norm-continuous. By the finite propagation speed property, the support of $e^{i(t-t_0)\AA_0}(1-\phi)\bfw_{t_0}$ is a subset of $M\backslash{K}$. Since the restrictions of the spaces $\HAs{t}$ to $M\backslash{K}$ are isometric, \eqref{E:functions} are also  continuous when viewed as sections of the bundle $\HAs{t}$. Hence, the right hand side of \eqref{E:dbtildeE} has finite $FE^s$-norm.

Since $\DD e^{i(t-t_0)\AA_0}(1-\phi)\bfw_{t_0}=0$, and 
\[
	\DD\,\Big[\beta\,\int_{t_0}^t\,e^{i\AA_0(t-s)}\,\bfh(s)\,ds\,\Big]
	\ = \ \bfh\ \in \ L^2\big([0,1],\HAs{t}\big),
\]
we see that the $L^2$-norm of $\DD\E_{t_0}(\bfw_{t_0},\bfh)$ is also finite. Hence,

\[
	\big\|\, \E_{t_0}(\bfw_{t_0},\bfh)\,\big\|_{FE^s,\DD}
	\ <\ \infty.
\]
\end{proof}

\subsection{Proof of Theorem~\ref{T:restriction}}\label{SS:prrestriction}
Let $\bfu_{t_0}\in \HAs{t_0}$, $\bff\in L^2\big([0,1],\HAs{t}\big)$. Set 
\begin{multline}\label{E:extension}
	\E_{t_0}(\bfu_{t_0},\bff)\ := \  
	\E_{t_0}(\phi\bfu_{t_0},\phi\bff)\ + \ 	
	\E_{t_0}\big((1-\phi)\bfu_{t_0},(1-\phi)\bff\big)
	\\ = \ 	
	\tilde{\E}_{t_0}(\phi\bfu_{t_0},\phi\bff)
	\ + \ 
	e^{i(t-t_0)\AA}\,(1-\phi)\,\bfu_{t_0}(x)
	\ - \ \beta\,\int_{t_0}^t\,e^{i\AA_0(t-s)}\,(1-\phi)\bff(s)\,ds.
\end{multline}
Clearly, $\E_{t_0}(\bfu_{t_0},\bff)$ satisfies the wave equation. From Lemmas~\ref{L:compactsupport} and \ref{L:nearinfty} we conclude that $\E_{t_0}(\bfu_{t_0},\bff)\in FE^s(M,\DD)$. Thus $\E:\HAs{t}\to FE^s(M,\ker\DD)$ is the inverse of $\res_{t_0}\oplus\DD$. In particular, $\res_{t_0}\oplus\DD:FE^s(M,\DD)\to \HAs{t_0}\oplus L^2\big([0,1],\HAs{t}\big)$ is a bijection. Since by construction $\res_t$ is a bounded linear map, the theorem follows from the Bounded Inverse Theorem, \cite[Theorem~III.11]{ReSi1}.\hfill$\square$

\subsection{The evolution operator}\label{SS:evolutionoperator}
Let $\E'_{t_0}:\HAs{t_0}\to FE^s(M,\ker\DD)$ denote the inverse of the isomorphism $\res_{t_0}$, cf. Corollary~\ref{C:restriction}. The isomorphism 
\begin{equation}\label{E:evolutionQ}
	Q:= \res_1\circ\, \E'_0:\,\HAs{0}\ \to \ \HAs{1},
\end{equation}
is called {\em the evolution operator}.

\begin{proposition}\label{P:unitarity}
For $s=0$ the operator $Q:L^2(\Sigma_0,E_0)\to L^2(\Sigma_1,E_1)$ is unitary. 
\end{proposition}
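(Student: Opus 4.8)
The plan is to establish unitarity by showing that $Q$ preserves the $L^2$-inner product and is surjective. Surjectivity is immediate: by Corollary~\ref{C:restriction} with $s=0$, both $\res_0$ and $\res_1$ are isomorphisms $FE^0(M,\ker\DD)\to L^2(\Sigma_j,E_j)$, so $Q=\res_1\circ\res_0^{-1}$ is a bijection. Hence it suffices to prove that $Q$ is an isometry, i.e. that for every $\bfu_0\in L^2(\Sigma_0,E_0)$ the solution $\bfu=\E'_0(\bfu_0)\in FE^0(M,\ker\DD)$ of $\DD\bfu=0$ satisfies $\|\bfu(1)\|_{L^2(\Sigma_1,E_1)}=\|\bfu(0)\|_{L^2(\Sigma_0,E_0)}$.

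The main step is an energy-conservation (Green's formula) argument. Using the expression \eqref{E:Calliasnear} for $\DD$ along the slices $\Sigma_t$ together with the fact that $\beta^2=1$, $\beta$ is self-adjoint for $(\cdot,\cdot)_E$, and each $\AA_t$ is formally self-adjoint, one computes $\frac{d}{dt}\,(\bfu(t),\bfu(t))_{L^2(\Sigma_t,E_t)}$ for a solution of $\DD\bfu=0$ and checks that it vanishes; the terms involving $i\AA_t$ cancel by self-adjointness of $\AA_t$, the mean-curvature term $\frac{n}{2}H_t$ combines with the $t$-derivative of the Riemannian volume density on $\Sigma_t$ to cancel (this is exactly the classical computation in \cite[\S2]{BarStrohmaier15}, done there on a compact slice), and the $-\frac1N\partial_t$ term produces the derivative we started from. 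The one genuinely new point over the compact case is that this formal computation must be justified on the non-compact $\Sigma$: there is no boundary term at spatial infinity to discard, which is where the finite-$s$-energy hypothesis and the product structure outside $[0,1]\times K$ (Assumption (A3)–(A4) and \eqref{E:restrictionDD}) enter. For sections in $FE^0(M,\ker\DD)$ that are $C^\infty$ and compactly supported on each slice — a dense subspace, by the density of $C^\infty_c$ in $L^2$ and the fact that $\res_0$ is a topological isomorphism — the integration by parts on $\Sigma_t$ has no boundary contribution and the computation is rigorous; the identity $\|\bfu(1)\|=\|\bfu(0)\|$ then extends to all of $L^2(\Sigma_0,E_0)$ by continuity of $Q$ (Corollary~\ref{C:restriction}).

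Alternatively, and perhaps more cleanly, one can avoid the density reduction by working directly on the region near infinity: decompose $\bfu_0=\phi\bfu_0+(1-\phi)\bfu_0$ as in Section~\ref{SS:cutoff}. On the compactly supported part, the solution agrees (by finite propagation speed, Proposition~\ref{P:finiteps}) with a solution on the auxiliary \emph{compact} manifold $\tilde M$, where unitarity of the evolution is Theorem~2.1 of \cite{BarStrohmaier15}. On the part supported outside $K'$, the solution is given explicitly by Lemma~\ref{L:nearinfty} as $e^{it\AA_0}(1-\phi)\bfu_0$, and $e^{it\AA_0}$ is unitary on $L^2$ because $\AA_0$ is essentially self-adjoint with discrete spectrum. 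One must then check that the cross-terms between the two pieces, restricted to $\Sigma_1$, still give no interference — again because their supports on $\Sigma_1$ are separated enough, by the propagation-speed bounds \eqref{E:disttoK}–\eqref{E:K'''}, so that the inner product is exactly the sum of the two contributions both at $t=0$ and at $t=1$. The hard part is handling these overlap/cross terms carefully and making sure the cut-off decomposition respects the inner product; the energy-identity route sidesteps this by working with a single global quantity, at the cost of a density argument. I would carry out the energy-identity argument as the main line, invoking the explicit form \eqref{E:Calliasnear} of $\DD$ and the self-adjointness of $\AA_t$ and $\beta$, and fall back on the localization argument only if the boundary-term-at-infinity justification needs reinforcement.
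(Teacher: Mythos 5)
Your main line (density of $C^\infty_c(\Sigma_0,E_0)$ in $L^2$, finite propagation speed to force compact support of the solution, and Green's formula to obtain $\|Q\bfu_0\|_{L^2(\Sigma_1,E_1)}=\|\bfu_0\|_{L^2(\Sigma_0,E_0)}$) is exactly the paper's proof. The only detail the paper spells out that you gloss over is the regularity upgrade licensing Green's formula: since $\bfu_0\in C^\infty_c$ lies in $H^s_{\AA_0}(\Sigma_0,E_0)$ for every $s$, the equation $\DD\bfu=0$ together with \eqref{E:DDonFE} exhibits $\bfu$ as a $C^1$ path in $H^{s-1}_{\AA_t}(\Sigma_t,E_t)$, and Sobolev embedding with $s>n/2+2$ then gives $\bfu\in C^1_c(M,E)$, which is what makes the integration by parts rigorous.
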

\begin{proof}
The proof is similar to the proof of Lemma~2.4 in \cite{BarStrohmaier15}. Since the space $C^\infty_c(\Sigma_0,E_0)$ is dense in $L^2(\Sigma_0,E_0)$, it is enough to check that 
\[
	\|Q\bfu_0\|_{L^2(\Sigma_1,E_1)} \ = \ 
	\|\bfu_0\|_{L^2(\Sigma_0,E_0)}
\]
for smooth compactly supported sections $\bfu_0$. Let $\bfu_0$ be such section. Then it belongs to $\HAs{0}$ for all $s$. By Theorem~\ref{T:restriction} there is a unique $\bfu\in FE^s(M,\ker\DD)$ whose restriction to $\Sigma_0$ is equal to $\bfu_0$. By the finite propagation speed property the support of $\bfu$ is compact. Since $\DD\bfu=0$, using equation \eqref{E:DDonFE} we conclude that 
\[
	\frac{d\bfu}{dt} \ = \  
	N\,\big(\,-i\,\AA_t\bfu- \frac{n}{2}H_t\,\big)\,\bfu.
\]
Hence, $\bfu$ is a $C^1$ section of $H^{s-1}_{\AA_t}(\Sigma_t,E_t)$.

Fix $s> \frac{n}2+2$. Then, by Sobolev embedding theorem, $H^{s-1}_{\AA_t}(\Sigma_t,E_t)\subset C^1(\Sigma_t,E_t)$. We conclude that $\bfu\in C^1_c(M,E)$.  Using the Green's formula (cf., for example, \cite[Eq.~(2)]{BarStrohmaier15}) we now obtain
\begin{multline}\notag
	0\ = \ \int_M\,\Big[\, (\DD\bfu,\bfu)+(\bfu,\DD\bfu)\,\Big]\,dV
	\\ = \ 
	\int_{\Sigma_1}\,(\beta Q\bfu_0,Q\bfu_0)\,dA\ - \ 
	\int_{\Sigma_0}\,(\beta\bfu_0,\bfu_0)\,dA
	\\ = \ \|Q\bfu_0\|_{L^2(\Sigma_1,E_1)} \ -\ 
	\|\bfu_0\|_{L^2(\Sigma_0,E_0)}.
\end{multline}
\end{proof}

\section{Properties of the evolution operator}\label{S:propertiesevolution}

In this section we show that most of the properties of the evolution operator $Q$ established in \cite{BarStrohmaier15} remain valid in our non-compact setting. Following \cite{BarStrohmaier15} we decompose the space of $L^2$-sections over $\Sigma_t$ into the direct sum of the spectral subspaces of $\AA_t$ corresponding to positive and negative parts of the spectrum. We write $Q$ as a matrix 
\begin{equation}\label{E:Qmatrix}
	Q\ = \ \begin{pmatrix}
	Q_{++}&Q_{+-}\\Q_{-+}&Q_{--}
	\end{pmatrix}
\end{equation}
with respect to this decomposition. One of the main result of this section is that the operators $Q_{++}$ and $Q_{--}$ are Fredholm. Our proofs are quite different from \cite{BarStrohmaier15} because of the non-compactness of $\Sigma$. In fact,  one of the main steps of the proof is showing the compactness of the ``off-diagonal" terms $Q_{+-}$ and $Q_{-+}$. In  \cite{BarStrohmaier15} it is done by showing that these operators are Fourier integral operators of negative order. On compact manifold this implies compactness. On non-compact manifold to establish compactness of an operator one also need to obtain estimates on its ``behavior at infinity". Most of this section is devoted to such estimates. 

\subsection{The spectral subspaces}\label{SS:spectralspaces}

For $I\subset \RR$ we denote by $\LSI{t}{I}\subset \LS{t}$
the spectral subspace of $\AA_t$ corresponding to the eigenvalues in $I$. The orthogonal projection $P_I^t:\LS{t}\to \LSI{t}{I}$ is call the {\em spectral projection} of $\AA_t$ corresponding to $I$.

We have $L^2$-orthogonal splittings
\begin{equation}\label{E:spdecomposition}
	\begin{aligned}
	\LS{0}\ &= \ \LSI{0}{[0,\infty)}\oplus \LSI{0}{(-\infty,0)};\\
	\LS{1}\ &= \ \LSI{1}{(0,\infty)}\oplus \LSI{0}{(-\infty,0]},
	\end{aligned}
\end{equation}
and write $Q$ as a $2\times2$-matrix \eqref{E:Qmatrix} with respect to this decomposition. Thus 
\[
	Q_{++}\ = \ P_{(0,\infty)}^1\circ Q_{\,|_{\LSI{0}{[0,\infty)}}},
\] 
etc. 

As a consequence of the unitarity of $Q$ (Proposition~\ref{P:unitarity}) we obtain the following
\begin{lemma}\label{L:Qpm}
The operator $Q_{+-}$ restricts to an isomorphism $\ker{}Q_{--}\to \ker{}Q_{++}$. Similarly, the operator $Q_{-+}$ restricts to an isomorphism $\ker{}Q_{++}\to \ker{}Q_{--}$.
\end{lemma}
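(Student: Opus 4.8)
The plan is to exploit the unitarity of $Q$ from Proposition~\ref{P:unitarity} together with the block decomposition \eqref{E:Qmatrix}. Write $H_0^{\ge 0}:=\LSI{0}{[0,\infty)}$, $H_0^{<0}:=\LSI{0}{(-\infty,0)}$, and similarly $H_1^{>0}:=\LSI{1}{(0,\infty)}$, $H_1^{\le 0}:=\LSI{1}{(-\infty,0]}$, so that $Q$ maps $H_0^{\ge 0}\oplus H_0^{<0}$ to $H_1^{>0}\oplus H_1^{\le 0}$ and $Q^*Q=\id$, $QQ^*=\id$. Expanding $Q^*Q=\id$ in block form gives, in particular, the two identities
\begin{equation}\notag
	Q_{++}^*Q_{++}\ + \ Q_{-+}^*Q_{-+}\ = \ \id_{H_0^{\ge 0}},
	\qquad
	Q_{+-}^*Q_{++}\ + \ Q_{--}^*Q_{-+}\ = \ 0,
\end{equation}
and expanding $QQ^*=\id$ gives
\begin{equation}\notag
	Q_{--}Q_{--}^*\ + \ Q_{-+}Q_{-+}^*\ = \ \id_{H_1^{\le 0}},
	\qquad
	Q_{-+}Q_{++}^*\ + \ Q_{--}Q_{+-}^*\ = \ 0.
\end{equation}

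First I would prove that $Q_{+-}$ maps $\ker Q_{--}$ into $\ker Q_{++}$. Take $\bfu\in\ker Q_{--}$. From the first relation above, $Q_{+-}^*Q_{++}\bfu = -Q_{--}^*Q_{-+}\bfu$; I instead want the companion relation for $Q^*$ acting the other way. The cleanest route: apply the identity $QQ^*=\id$ restricted appropriately. Concretely, consider the element $\bfv:=Q_{+-}\bfu\in H_1^{>0}$ and compute $Q_{++}^*$ of it, or rather show $\|Q_{++}Q_{+-}^*\|$-type cancellations. A more transparent approach is to use that for $\bfu\in\ker Q_{--}$ the vector $Q\binom{0}{\bfu}=\binom{Q_{+-}\bfu}{Q_{--}\bfu}=\binom{Q_{+-}\bfu}{0}$, and since $Q$ is unitary, $\|Q_{+-}\bfu\|=\|\bfu\|$; applying $Q^*$ (also unitary) to $\binom{Q_{+-}\bfu}{0}$ recovers $\binom{0}{\bfu}$, i.e. $Q^*\binom{\bfv}{0}=\binom{0}{\bfu}$ with $\bfv=Q_{+-}\bfu$. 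Writing out the top component of this equation, $(Q^*)_{\,H_0^{\ge0}\leftarrow H_1^{>0}}\,\bfv + (Q^*)_{\,H_0^{\ge0}\leftarrow H_1^{\le0}}\cdot 0 = 0$, i.e. $Q_{++}^*\bfv=0$ (using that the relevant block of $Q^*$ is $Q_{++}^*$). Hence $\bfv\perp \IM Q_{++}$; but I want $Q_{++}\bfv'=0$ for the right vector, so I should instead transpose the roles: the statement "$Q_{+-}$ restricts to an isomorphism $\ker Q_{--}\to\ker Q_{++}$" is proved by checking directly that for $\bfu\in\ker Q_{--}$ one has $Q_{++}(Q_{+-}\bfu)=0$. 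This follows from the block identity coming from $Q^*Q=\id$: the $(+,-)$ entry gives $Q_{++}^*Q_{+-}+Q_{-+}^*Q_{--}=0$, so on $\ker Q_{--}$ we get $Q_{++}^*Q_{+-}\bfu=0$; combined with the norm-preservation $\|Q_{+-}\bfu\|=\|\bfu\|$ (so $Q_{+-}$ is injective on $\ker Q_{--}$) and the analogous argument for $Q_{-+}$ run from $\ker Q_{++}$ backwards, one shows $Q_{-+}$ and $Q_{+-}$ are mutually inverse bijections between the two kernels after checking they land in the correct kernels.

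Then I would close the loop symmetrically: for $\bfw\in\ker Q_{++}$ the vector $Q^*\binom{\bfw}{0}=\binom{0}{Q_{-+}^*\!\text{-type}}$ forces $Q_{-+}\bfw$ (or the appropriate adjoint block) to lie in $\ker Q_{--}$, and the compositions $Q_{-+}\circ Q_{+-}$ and $Q_{+-}\circ Q_{-+}$ are the identity on $\ker Q_{--}$ and $\ker Q_{++}$ respectively, because $Q\binom{0}{\bfu}$ has zero lower component and unitarity of $Q$ then pins down the action of the off-diagonal blocks uniquely. Thus both off-diagonal blocks restrict to mutually inverse isomorphisms of the two kernels.

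The main obstacle is purely bookkeeping: one must be careful about which block of $Q^*$ equals the adjoint of which block of $Q$, since the target decomposition of $Q$ at $\Sigma_1$ uses $(0,\infty)$ and $(-\infty,0]$ while the source at $\Sigma_0$ uses $[0,\infty)$ and $(-\infty,0)$, so the projections are genuinely different and $(Q^*)_{++}=(Q_{++})^*$ as maps between the correctly-identified spectral subspaces, etc. Once the block-adjoint dictionary is fixed, every step is a one-line consequence of $Q^*Q=QQ^*=\id$ and the observation that $Q\binom{0}{\bfu}$ and $Q\binom{\bfu'}{0}$ have a vanishing component precisely when $\bfu\in\ker Q_{--}$ resp. $\bfu'\in\ker Q_{++}$. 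No non-compactness issues enter here — this lemma is a formal consequence of unitarity and is proved exactly as in the compact case.
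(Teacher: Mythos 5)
Your overall approach — exploiting unitarity of $Q$ and the block relations from $Q^*Q=\id$, $QQ^*=\id$ — is exactly the one the paper intends (it cites Lemma~2.5 of B\"ar--Strohmaier verbatim), so the strategy is right. But there is a genuine gap, which you in fact noticed and then papered over rather than resolved: the lemma as printed is ill-typed, and your argument never reconciles what it proves with what it claims to prove.

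Concretely: $Q_{+-}$ is a map $\LSI{0}{(-\infty,0)}\to\LSI{1}{(0,\infty)}$, so $Q_{+-}(\ker Q_{--})$ sits inside $L^2(\Sigma_1,E_1)$, whereas $\ker Q_{++}\subset\LSI{0}{[0,\infty)}\subset L^2(\Sigma_0,E_0)$. These are sections over different hypersurfaces, so ``$Q_{+-}$ restricts to an isomorphism $\ker Q_{--}\to\ker Q_{++}$'' literally cannot hold, and the expression $Q_{++}(Q_{+-}\bfu)$ you eventually write down is not defined. What your computation actually establishes (and what the correct statement, as in B\"ar--Strohmaier, asserts) is that $Q_{+-}$ restricts to an isomorphism $\ker Q_{--}\to\ker Q_{++}^{*}$, and $Q_{-+}$ to an isomorphism $\ker Q_{++}\to\ker Q_{--}^{*}$. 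You get exactly this from the block identity $Q_{++}^{*}Q_{+-}+Q_{-+}^{*}Q_{--}=0$, which gives $Q_{++}^{*}Q_{+-}\bfu=0$ for $\bfu\in\ker Q_{--}$; that is membership in $\ker Q_{++}^{*}$, which is precisely the orthogonal complement of $\IM Q_{++}$ inside $\LSI{1}{(0,\infty)}$, so the domain and codomain now match. Injectivity is your norm argument, $\|Q_{+-}\bfu\|=\|\bfu\|$. For surjectivity, take $\bfv\in\ker Q_{++}^{*}$; then $Q^{*}\binom{\bfv}{0}=\binom{Q_{++}^{*}\bfv}{Q_{+-}^{*}\bfv}=\binom{0}{Q_{+-}^{*}\bfv}$, so $\binom{\bfv}{0}=Q\binom{0}{Q_{+-}^{*}\bfv}$, which reads $Q_{--}(Q_{+-}^{*}\bfv)=0$ and $Q_{+-}(Q_{+-}^{*}\bfv)=\bfv$. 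That closes the argument. Your instinct (``but I want $Q_{++}\bfv'=0$ for the right vector, so I should transpose the roles'') was correct; the fix is to accept that the target really is $\ker Q_{++}^{*}$, not to keep aiming at $\ker Q_{++}$. Note that the index identity needed later, $\ind Q_{++}=-\ind Q_{--}$, is exactly what this corrected lemma gives, since $\dim\ker Q_{++}^{*}=\dim\operatorname{coker}Q_{++}$ once Fredholmness is established.
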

\begin{proof}
The proof is a verbatim repetition of the proof of Lemma~2.5 of \cite{BarStrohmaier15}.
\end{proof}

\begin{lemma}\label{L:Qpms}
For every $s\ge0$, we have
\begin{equation}\label{E:HcomHloc}
 \begin{aligned}
	Q_{+-}\circ P_{(-\infty,0)}^0&:\,
	 H^s_{\comp}(\Sigma_{t_0},SM_0\otimes E_0)\ \to \  
	 H^{s+1}_{\loc}(\Sigma_{1},SM_1\otimes E_1),\\
	Q_{-+}\circ P_{[0,\infty)}^0&:\,
	 H^s_{\comp}(\Sigma_{t_0},SM_0\otimes E_0)\ \to \  
	 H^{s+1}_{\loc}(\Sigma_{1},SM_1\otimes E_1).
 \end{aligned}
\end{equation}
\end{lemma}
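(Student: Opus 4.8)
The plan is to establish the stated regularity gain by combining the finite-propagation-speed structure of $Q$ with the microlocal/elliptic picture of the evolution operator on a compact model. I would first reduce to the compactly supported setting. Fix $s\ge 0$ and a compact set $X\subset\Sigma$. For a section $\bfu_0\in H^s_{\comp}(\Sigma_0,E_0)$ with $\supp\bfu_0\subset X$, let $\bfu= \E'_0(\bfu_0)\in FE^s(M,\ker\DD)$ be the corresponding solution of $\DD\bfu=0$. By Proposition~\ref{P:finiteps}, $\bfu$ is supported in the set of points at distance $\le 1$ from $X$; enlarging $X$ to a compact $\hat X$ that contains this $1$-neighborhood, the entire solution lives over $[0,1]\times\hat X$. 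Now I would pass to a compact model $\tilde M=[0,1]\times\tilde\Sigma$ with $\tilde\Sigma\Supset\hat X$, equipped with Lorentzian metric, Dirac bundle, and Callias potential that agree with those on $M$ over $[0,1]\times\hat X$, exactly as in the proof of Lemma~\ref{L:compactsupport}. By uniqueness of solutions with compact support and the finite propagation speed on $\tilde M$, the restriction of $\bfu$ coincides with the solution $\tilde\E_0(\bfu_0;0)$ produced by the Bär–Strohmaier construction on $\tilde M$. Hence $Q\bfu_0 = \widetilde Q\bfu_0$ over $\hat X$, where $\widetilde Q$ is the Bär–Strohmaier evolution operator on $\tilde M$, and the off-diagonal spectral projections on $\Sigma_0$ and $\Sigma_1$ are the restrictions of the corresponding spectral projections of $\tilde\AA_0$, $\tilde\AA_1$ (here one must check the spectral subspaces match on the relevant compact region; since $\AA_0$ and $\tilde\AA_0$ agree over $\hat X$ and the relevant sections are supported there, the images of the projections agree after cutting off).

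The second step is to invoke the compact result. In \cite{BarStrohmaier15} it is shown (Lemma~2.6 there, or the Fourier-integral-operator analysis in \S2) that $\widetilde Q_{+-}\circ P^0_{(-\infty,0)}$ and $\widetilde Q_{-+}\circ P^0_{[0,\infty)}$ are classical Fourier integral operators of order $\le -1$ on the compact manifold $\tilde\Sigma$, hence map $H^s(\tilde\Sigma_0,\tilde E_0)$ to $H^{s+1}(\tilde\Sigma_1,\tilde E_1)$ continuously. Combined with the first step, this gives: for $\bfu_0$ supported in $X$, the section $Q_{+-}\circ P^0_{(-\infty,0)}\bfu_0$, restricted to any fixed compact subset of $\Sigma_1$, has $H^{s+1}$-regularity with a bound controlled by $\|\bfu_0\|_{H^s}$. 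Since $H^{s+1}_{\loc}$-membership is precisely the assertion that the restriction to every compact set is $H^{s+1}$, this proves the claim for $Q_{+-}\circ P^0_{(-\infty,0)}$, and symmetrically for $Q_{-+}\circ P^0_{[0,\infty)}$. (One small bookkeeping point: different $\bfu_0$ may require different compact models $\tilde\Sigma$, but for a fixed compact source set $X$ one can use one model, and that is all that is needed to verify $H^{s+1}_{\loc}$ of the output.)

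The main obstacle I anticipate is the compatibility of the \emph{spectral subspaces} of $\AA_0$ (resp. $\AA_1$) with those of the compact model $\tilde\AA_0$ (resp. $\tilde\AA_1$): the spectral projection $P^0_{(-\infty,0)}$ is a nonlocal operator, so even though $\AA_0$ and $\tilde\AA_0$ agree over $\hat X$, the ranges of $P^0_{(-\infty,0)}$ and $\widetilde P^0_{(-\infty,0)}$ need not agree there. The fix is to note that the difference $P^0_{(-\infty,0)} - \widetilde P^0_{(-\infty,0)}$, conjugated by a cutoff adapted to $\hat X$, is smoothing (both operators differ from $\frac12(1-\mathrm{sign}\,\AA)$-type expressions of operators that coincide on $\hat X$, and this difference maps into $C^\infty$ on compacts by a standard pseudodifferential parametrix / heat-kernel argument, cf. the discreteness of spectrum of strongly Callias-type operators, \cite[\S3]{BrShi17}); a smoothing error only improves the regularity statement. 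Once this is handled, everything else is a routine assembly of the finite-propagation-speed reduction and the cited compact-manifold FIO bounds.
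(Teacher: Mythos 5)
Your approach is a genuinely different route from the paper's. The paper's proof is a one-line citation: it asserts, following the proof of Lemma~2.6 in B\"ar--Strohmaier, that $Q_{+-}\circ P^0_{(-\infty,0)}$ and $Q_{-+}\circ P^0_{[0,\infty)}$ are Fourier integral operators of order $-1$ whose canonical relation is a canonical graph, and then invokes the FIO mapping theorem of Duistermaat. Your plan instead reduces explicitly to a compact model via finite propagation speed, and you correctly identify the point that the paper's terse citation leaves implicit: the spectral projection $P^0_{(-\infty,0)}$ is nonlocal, so it does not coincide with the compact-model projection $\widetilde P^0_{(-\infty,0)}$ even on $\hat X$, and $P^0_{(-\infty,0)}\bfu_0$ is not compactly supported for compactly supported $\bfu_0$. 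This is the genuine crux in the non-compact setting, and surfacing it is valuable.

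The gap is in the execution of your fix. You assert that $\chi_1\bigl(P^0_{(-\infty,0)}-\widetilde P^0_{(-\infty,0)}\bigr)\chi_2$ is smoothing for cutoffs adapted to $\hat X$, which is correct (provable by iterating the resolvent identity with disjointly supported cutoffs, in the spirit of Lemma~\ref{L:commutator}), but this only takes care of the cut-off piece. Since $\supp\bigl(P^0_{(-\infty,0)}\bfu_0\bigr)$ is noncompact, you also have to control the tail: writing $P^0_{(-\infty,0)}\bfu_0=\psi P^0_{(-\infty,0)}\bfu_0+(1-\psi)P^0_{(-\infty,0)}\bfu_0$ for a wide cutoff $\psi$, the second summand equals $(1-\psi)[P^0_{(-\infty,0)},\chi_X]\bfu_0$ (disjoint supports), and you need to show that $\chi_Y\,P^1_{(0,\infty)}\,Q\,(1-\psi)[P^0_{(-\infty,0)},\chi_X]$ maps $H^s$ into $H^{s+1}$. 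This does not follow from the compact-model comparison at all, because its argument never enters $[0,1]\times\hat X$; instead one has to combine finite propagation speed (to get $\chi_Y\,Q\,(1-\psi)=0$ after commuting $\chi_Y$ past $P^1_{(0,\infty)}$) with the iterated commutator estimate for $(1-\psi)[P^0_{(-\infty,0)},\chi_X]$ and for $[\chi_Y,P^1_{(0,\infty)}]$. The same care is needed with $P^1_{(0,\infty)}$ versus $\widetilde P^1_{(0,\infty)}$ at the top end. Your vague appeal to ``a standard pseudodifferential parametrix / heat-kernel argument'' does not make clear that all of these pieces are in place; once they are, your argument closes and gives a more self-contained proof than the paper's.
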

\begin{proof}
It is shown in the proof of Lemma~2.6 of \cite{BarStrohmaier15}  that $Q_{+-}\circ P_{(-\infty,0)}^0$ and $Q_{-+}\circ P_{[0,\infty)}^0$ are Fourier integral operators of order -1 whose canonical relation is a canonical graph.   The statement of the lemma follows from the mapping property of Fourier integral operators, \cite[Corollary~4.4.5]{Duistermaat96FIO}.
\end{proof}

In exactly the same way as in \cite[Corollary~2.7]{BarStrohmaier15} we obtain the following corollary

\begin{corollary}\label{C:regularity}
The kernels of $Q_{++}$ and $Q_{--}$ consist of smooth sections. 
\end{corollary}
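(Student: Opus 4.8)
The plan is to follow the proof of \cite[Corollary~2.7]{BarStrohmaier15} essentially verbatim: Lemma~\ref{L:Qpm} transfers the problem between the two kernels, Lemma~\ref{L:Qpms} provides a gain of one Sobolev degree, and one bootstraps. The only genuinely new point, caused by the non-compactness of $\Sigma$, is that Lemma~\ref{L:Qpms} is stated for compactly supported data, so it has to be upgraded to a statement about local Sobolev spaces before the bootstrap applies; I would do this via finite propagation speed.

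First I would set up the transfer. Let $\bfu_0\in\ker Q_{--}\subset L^2(\Sigma_0,E_0)$. By the splitting \eqref{E:spdecomposition} we have $\bfu_0=P^0_{(-\infty,0)}\bfu_0$, and since $Q_{--}\bfu_0=0$ the $\LSI{1}{(-\infty,0]}$-component of $Q\bfu_0$ vanishes, so $\bfu_1:=Q\bfu_0=Q_{+-}\bfu_0\in\LSI{1}{(0,\infty)}$; by Lemma~\ref{L:Qpm}, $\bfu_1\in\ker Q_{++}$, and conversely every element of $\ker Q_{++}$ arises this way. Since $Q$ is unitary (Proposition~\ref{P:unitarity}), $\bfu_0=Q^\ast\bfu_1$; projecting onto $\LSI{0}{(-\infty,0)}$, which contains $\bfu_0$, and using $\bfu_1=P^1_{(0,\infty)}\bfu_1$, one gets
\begin{equation}\notag
	\bfu_1\ = \ \big(Q_{+-}\circ P^0_{(-\infty,0)}\big)\,\bfu_0,
	\qquad
	\bfu_0\ = \ \big(Q_{+-}\circ P^0_{(-\infty,0)}\big)^\ast\,\bfu_1 .
\end{equation}
Hence it suffices to show that any pair $(\bfu_0,\bfu_1)$ related in this way, with $\bfu_0\in L^2(\Sigma_0,E_0)$ and $\bfu_1\in L^2(\Sigma_1,E_1)$, consists of smooth sections; this simultaneously yields smoothness of $\ker Q_{--}$ and of $\ker Q_{++}$.

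Next I would run the bootstrap. By Lemma~\ref{L:Qpms} the operator $T:=Q_{+-}\circ P^0_{(-\infty,0)}$ is, microlocally, a Fourier integral operator of order $-1$ whose canonical relation is a canonical graph, and the same is therefore true of $T^\ast$. Finite propagation speed (Proposition~\ref{P:finiteps}) shows that the Schwartz kernels of $T$ and $T^\ast$ are supported in $\{(x,y):\dist(x,y)\le 1\}$, so both operators are properly supported and consequently map $H^s_{\loc}\to H^{s+1}_{\loc}$ for every $s$; this is the non-compact substitute for the $H^s_{\comp}\to H^{s+1}_{\loc}$ statement of Lemma~\ref{L:Qpms}. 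Starting from $\bfu_0\in H^0_{\loc}$, the first relation gives $\bfu_1=T\bfu_0\in H^1_{\loc}$, the second gives $\bfu_0=T^\ast\bfu_1\in H^2_{\loc}$, and iterating yields $\bfu_0\in H^s_{\loc}$ and $\bfu_1\in H^s_{\loc}$ for all $s$; since $\bigcap_s H^s_{\loc}=C^\infty$ by the Sobolev embedding theorem, $\bfu_0$ and $\bfu_1$ are smooth.

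The main obstacle is exactly this passage from the compactly supported mapping property of Lemma~\ref{L:Qpms} to the local one used in the bootstrap. If one prefers not to invoke ``properly supported'' directly, the alternative is to localize: to prove smoothness of $\bfu_0$ near a point $x_0$, fix a cutoff $\chi$ equal to $1$ on a $\dist$-ball of radius $2$ about $x_0$; by finite propagation speed $T\bfu_0$ and $T(\chi\bfu_0)$ agree near $x_0$, while $P^0_{(-\infty,0)}(\chi\bfu_0)=\chi\bfu_0+[P^0_{(-\infty,0)},\chi]\bfu_0$ with the commutator of order $-1$, which is absorbed into the induction. Either way the argument reduces to the compactly supported estimates that are literally Lemma~\ref{L:Qpms} and its adjoint, after which it is identical to \cite[Corollary~2.7]{BarStrohmaier15}.
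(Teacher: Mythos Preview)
Your overall strategy---transfer between $\ker Q_{--}$ and $\ker Q_{++}$ via Lemma~\ref{L:Qpm}, then bootstrap using the order~$-1$ mapping property of Lemma~\ref{L:Qpms}---is exactly what the paper intends by citing \cite[Corollary~2.7]{BarStrohmaier15}, and you are right that the non-compactness of $\Sigma$ creates a gap between $H^s_{\comp}\to H^{s+1}_{\loc}$ and what the bootstrap actually needs.

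The problem is that your proposed fix does not work. Finite propagation speed is a property of the wave evolution $Q$, not of $T=P^1_{(0,\infty)}\circ Q\circ P^0_{(-\infty,0)}$. The spectral projections $P^0_{(-\infty,0)}$ and $P^1_{(0,\infty)}$ are order~$0$ pseudodifferential operators whose Schwartz kernels are smooth off the diagonal but \emph{not} compactly supported there; composing with them on either side destroys any support constraint. So the kernel of $T$ is \emph{not} supported in $\{\dist(x,y)\le 1\}$, and $T$ is \emph{not} properly supported. Your alternative localisation argument fails for the same reason: even if $Q\bfu_0$ and $Q(\chi\bfu_0)$ agree near a point, once you apply the non-local projection $P^1_{(0,\infty)}$ the values near that point again depend on the global section, so $T\bfu_0$ and $T(\chi\bfu_0)$ need not agree there. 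The commutator $[P^0_{(-\infty,0)},\chi]$ being ``of order $-1$'' does not help until you have already controlled the outer projection.

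To close the gap you must separate the properly supported part of the projections from the remainder and show the remainder is harmless. One route is to write each spectral projection as a properly supported order~$0$ $\Psi$DO plus an operator with smooth kernel, and then argue---using that $\AA_t$ is strongly Callias, has discrete spectrum, and its eigenfunctions lie in every $H^s_{\AA_t}$---that this smoothing remainder maps $L^2$ into $\bigcap_s H^s_{\AA_t}\subset C^\infty$. An alternative is to run the bootstrap in the global spaces $H^s_{\AA_t}$ rather than in $H^s_{\loc}$, which again requires showing that $T$ (and $T^\ast$) gain one $\AA$-derivative on $L^2$; this is not immediate from the FIO calculus alone.
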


\subsection{Compactness of the off-diagonal terms}\label{SS:compactness}
We are now formulate the main result of this section:
\begin{proposition}\label{P:compactness}
The operators $Q_{+-}$ and $Q_{-+}$ are compact. 
\end{proposition}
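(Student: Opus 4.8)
The plan is to follow the decomposition strategy outlined in the introduction: fix compactly supported cut-off functions $\phi,\psi:\Sigma\to[0,1]$ as in Section~\ref{SS:cutoff} (so that $\supp\phi$ contains a large neighborhood of $K$, and $\psi\equiv 1$ on a neighborhood of $\supp\phi$), and write
\begin{equation}\notag
	Q_{-+}\ = \ \phi\,Q_{-+}\,\psi
	\ + \ \phi\,Q_{-+}\,(1-\psi)
	\ + \ (1-\phi)\,Q_{-+}\,\psi
	\ + \ (1-\phi)\,Q_{-+}\,(1-\psi),
\end{equation}
and similarly for $Q_{+-}$. I would prove compactness of each of the four summands separately; by symmetry of the argument it suffices to treat $Q_{-+}$.

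First, the term $\phi\,Q_{-+}\,\psi$. Here I use the finite propagation speed property, Proposition~\ref{P:finiteps}, to localize: since $\psi$ is compactly supported, a solution of $\DD\bfu=0$ with Cauchy data in $\supp\psi$ stays, on $\Sigma_1$, inside a fixed compact set (the $1$-neighborhood of $\supp\psi$ in the metric of Definition~\ref{D:minmetric}); multiplying by $\phi$ restricts further. As in Lemma~\ref{L:compactsupport}, I embed this compact region into a closed spatial manifold $\tilde\Sigma$ and realize $\phi\,Q_{-+}\,\psi$ as a compression of the corresponding operator $\tilde Q_{-+}$ on the spatially compact manifold $\tilde M$. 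On $\tilde M$, the B\"ar--Strohmaier result (Lemma~\ref{L:Qpms} applied to $\tilde\DD$, via \cite[Corollary~2.7]{BarStrohmaier15}) says $\tilde Q_{-+}\circ P^0_{[0,\infty)}$ is a Fourier integral operator of order $-1$, hence (on a compact manifold) maps $L^2\to H^1$, and composing with the spectral projection and with the compactly supported multipliers $\phi,\psi$ gives an operator $L^2\to H^1_{\comp}\hookrightarrow L^2$ that factors through a compact Rellich embedding. Thus $\phi\,Q_{-+}\,\psi$ is compact. The two ``mixed'' terms $\phi\,Q_{-+}\,(1-\psi)$ and $(1-\phi)\,Q_{-+}\,\psi$ are in fact \emph{zero}: because $\dist(\supp\phi,\Sigma\setminus\supp\psi)>2$ while the Cauchy time interval has length $1$, finite propagation speed forces $Q_{-+}\,(1-\psi)$ to have support in $\Sigma\setminus\supp\phi$ at time $1$, killing the $\phi$ factor; the other term vanishes dually using unitarity of $Q$ (equivalently, running the wave equation backwards).

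The remaining term $(1-\phi)\,Q_{-+}\,(1-\psi)$ is supported, at both ends, in $\Sigma\setminus K$, where by \eqref{E:restrictionDD} the operator $\DD$ is the product operator $-\beta(-\partial_t+i\AA_0)$. By Lemma~\ref{L:nearinfty} (Duhamel with $\bff=0$), on this region $Q$ acts simply as $e^{i\AA_0}$. So I must show that $(1-\phi)\,P^1_{(-\infty,0)}\,e^{i\AA_0}\,P^0_{[0,\infty)}\,(1-\psi)$ is compact as an operator on $L^2(\Sigma,E_0)$, where now $P^0,P^1$ are spectral projections for the \emph{same} operator $\AA_0$ (corresponding to the two different sign conventions in \eqref{E:spdecomposition}). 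Write $P^0_{[0,\infty)}=P_{[0,\infty)}$ and $P^1_{(-\infty,0)}=P_{(-\infty,0)}$ for $\AA_0$. Then $P_{(-\infty,0)}\,e^{i\AA_0}\,P_{[0,\infty)}=e^{i\AA_0}\,P_{(-\infty,0)}P_{[0,\infty)}=0$ by functional calculus, so the entire term vanishes identically. Hence $Q_{-+}=\phi\,Q_{-+}\,\psi$ is compact, and the same argument gives compactness of $Q_{+-}$.

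The main obstacle is the careful bookkeeping in the first term: one must verify that the localization via finite propagation speed is compatible with the spectral projections $P^t_I$ (which are nonlocal), so that the reduction to the spatially compact model genuinely identifies $\phi\,Q_{-+}\,\psi$ (with its spectral cut-offs) with a compression of the Fourier-integral-operator expression $\phi\,\tilde Q_{-+}\circ P^0_{[0,\infty)}\circ\psi$ on $\tilde M$. The point is that $\phi\,Q_{-+}\,\psi=\phi\,P^1_{(-\infty,0)}\,Q\,P^0_{[0,\infty)}\,\psi$, and since $\supp\psi\subset K''$ and $\DD$ agrees with $\tilde\DD$ on $[0,1]\times K''$, finite propagation speed gives $Q\,P^0_{[0,\infty)}\,\psi=\tilde Q\,P^0_{[0,\infty)}\,\psi$ when restricted (via $\phi$) to a neighborhood small enough that no information has propagated in from outside $\tilde\Sigma$; then the spectral projections on the two sides also agree there because $\AA_0$ and $\tilde\AA_0$ coincide on $K''$ and the relevant pseudodifferential parametrices are local up to smoothing. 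Making this precise — that the nonlocality of $P_I$ only contributes smoothing errors, which are themselves compact after the compactly supported cutoffs — is the technical heart of the argument.
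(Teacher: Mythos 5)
Your treatment of the fourth term contains a fatal error. You reduce, correctly, to showing compactness of an operator of the form $(1-\phi)\,P^1_{(-\infty,0]}\,e^{i\AA_0}\,P^0_{[0,\infty)}\,(1-\psi)$, but you then assert that ``$P^0,P^1$ are spectral projections for the \emph{same} operator $\AA_0$.'' This misreads the notation: $P^0_I$ is the spectral projection of $\AA_0$ while $P^1_I$ is the spectral projection of $\AA_1$. These are genuinely different self-adjoint operators on $L^2(\Sigma,E)$ — assumption (A4) only makes $\AA_0$ and $\AA_1$ agree on $\Sigma\setminus K$, not on all of $\Sigma$ — and the projections are nonlocal, so $P^1_{(-\infty,0]}\,e^{i\AA_0}\,P^0_{[0,\infty)}\neq 0$ in general. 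Your one-line ``by functional calculus'' argument collapses precisely here, and this is the real content of the step. The paper's Lemma~\ref{L:infinitepart} handles it by a homotopy: set $S(t)=P^0_{(-\infty,0]}\,e^{i\AA_0}\,P^t_{[0,\infty)}\,(1-\psi)$, note that $S(0)$ is $(1-\psi)$ times the finite-rank projection onto $\ker\AA_0$ (and hence compact), and show via a Riesz/contour representation of $P^t_{[0,\infty)}$ that $\frac{d}{dt}S(t)$ is compact because $\frac{d\AA_t}{dt}$ is supported in $K$ and is therefore annihilated by $(1-\psi)$.

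You also claim the third term $(1-\phi)Q_{-+}\psi$ ``vanishes dually using unitarity of $Q$.'' It does not: the adjoint $\psi\,Q_{-+}^{*}\,(1-\phi)$ involves $1-\phi$, whose support contains all of $\Sigma\setminus\supp\phi$ and hence meets every neighborhood of $\supp\psi$, so finite propagation speed gives no vanishing. What is true (Lemma~\ref{L:thirdterm}) is that $Q\circ\psi=\tilde\psi\circ Q\circ\psi$ for a compactly supported $\tilde\psi\equiv 1$ on $K'''$; then $(1-\phi)\tilde\psi$ is compactly supported and the term is compact by Lemma~\ref{L:Qpms} and Rellich. Finally, the commutator issue you flag at the end — moving compactly supported multipliers past the nonlocal spectral projections — is not an optional technical footnote: it is isolated in the paper as Lemma~\ref{L:commutator} and is invoked in the treatment of the second, third, and fourth summands, so a complete proof must establish it rather than gesture at it.
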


Before proving the proposition let us mention the following important corollary:

\begin{corollary}\label{C:Fredholm}
The operators $Q_{++}$ and $Q_{--}$ are Fredholm and 
\begin{equation}\label{E:Fredholm}
	\ind Q_{++}\ = \ - \ind Q_{--}. 
\end{equation}
\end{corollary}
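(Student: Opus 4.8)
The plan is to deduce Corollary~\ref{C:Fredholm} from Proposition~\ref{P:compactness} together with the unitarity of $Q$ established in Proposition~\ref{P:unitarity}, following the scheme of \cite[Corollary~2.8]{BarStrohmaier15} but taking care that all arguments are valid in the non-compact setting. First I would record the block structure coming from \eqref{E:spdecomposition}: writing $H_0^\pm$ and $H_1^\pm$ for the spectral subspaces appearing there, we have $Q\colon H_0^+\oplus H_0^-\to H_1^+\oplus H_1^-$ with the $2\times2$ matrix \eqref{E:Qmatrix}. Since $Q$ is unitary, $Q^*Q=\id$ and $QQ^*=\id$; reading off the $(+,+)$ and $(-,-)$ diagonal entries of these identities gives
\begin{equation}\notag
	Q_{++}^*Q_{++}\ + \ Q_{-+}^*Q_{-+}\ = \ \id_{H_0^+},
	\qquad
	Q_{--}^*Q_{--}\ + \ Q_{+-}^*Q_{+-}\ = \ \id_{H_0^-},
\end{equation}
and similarly $Q_{++}Q_{++}^*+Q_{+-}Q_{+-}^*=\id_{H_1^+}$ and $Q_{--}Q_{--}^*+Q_{-+}Q_{-+}^*=\id_{H_1^-}$.

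Now by Proposition~\ref{P:compactness} the operators $Q_{+-}$, $Q_{-+}$ are compact, hence so are $Q_{+-}^*Q_{+-}$, $Q_{-+}^*Q_{-+}$, $Q_{+-}Q_{+-}^*$, $Q_{-+}Q_{-+}^*$. The four identities above then show that $Q_{++}^*Q_{++}$, $Q_{++}Q_{++}^*$, $Q_{--}^*Q_{--}$, $Q_{--}Q_{--}^*$ are each of the form $\id - (\text{compact})$. By Atkinson's theorem this means $Q_{++}^*$ is both a left and a right parametrix for $Q_{++}$ modulo compacts, so $Q_{++}$ is Fredholm; likewise $Q_{--}$ is Fredholm. (Equivalently: $Q_{\pm\pm}$ has closed range, finite-dimensional kernel, and finite-dimensional cokernel because $\ker Q_{\pm\pm}$ and $\ker Q_{\pm\pm}^*$ are eigenspaces of compact operators for the eigenvalue $1$.)

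For the index identity, I would argue that $Q$ being unitary makes it a Fredholm operator of index zero from $H_0^+\oplus H_0^-$ to $H_1^+\oplus H_1^-$, and that the off-diagonal entries, being compact, do not affect the index. Concretely, the operator $\begin{pmatrix} Q_{++}&0\\0&Q_{--}\end{pmatrix}$ differs from $Q$ by the compact operator $\begin{pmatrix} 0&Q_{+-}\\Q_{-+}&0\end{pmatrix}$, hence has the same index as $Q$, namely $0$; and the index of a block-diagonal operator is the sum of the indices of the blocks, so $\ind Q_{++}+\ind Q_{--}=0$, which is \eqref{E:Fredholm}. I do not expect any real obstacle here: the only point that needs a word of care is that $Q$ is a bounded operator between genuine Hilbert spaces (the $L^2$-spaces, with $s=0$), so that "compact" and "Fredholm" have their usual meaning and Atkinson's theorem and the additivity of the index apply verbatim; the substantive work has already been done in Proposition~\ref{P:compactness} and Proposition~\ref{P:unitarity}. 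If one wanted to avoid invoking a general block-additivity statement, an alternative is to use Lemma~\ref{L:Qpm}, which gives isomorphisms $\ker Q_{--}\xrightarrow{\sim}\ker Q_{++}$ and, applying the same lemma to $Q^*$ (also unitary), $\ker Q_{--}^*\xrightarrow{\sim}\ker Q_{++}^*$ — wait, more precisely one gets $\dim\ker Q_{++}=\dim\ker Q_{--}$ and $\dim\operatorname{coker}Q_{++}=\dim\ker Q_{++}^*=\dim\ker Q_{--}^*=\dim\operatorname{coker}Q_{--}$ would give $\ind Q_{++}=\ind Q_{--}$, which is the wrong sign, so that route must instead pair $\ker Q_{++}$ with $\ker Q_{--}^*=\operatorname{coker}Q_{--}$ via the unitarity relation $Q_{-+}|_{\ker Q_{++}}\colon\ker Q_{++}\to\ker Q_{--}$ together with the observation that $Q_{++}^*$ has kernel inside $H_1^+$ pairing with $\ker Q_{++}$; to keep things clean I would simply use the compact-perturbation argument above.
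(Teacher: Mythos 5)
Your argument is correct and follows essentially the same route as the paper: use Proposition~\ref{P:unitarity} to get $\ind Q=0$, and Proposition~\ref{P:compactness} to reduce to the block-diagonal operator, whose index is $\ind Q_{++}+\ind Q_{--}$. The extra parametrix identities from $Q^*Q=QQ^*=\id$ are a harmless (and sound) elaboration of why each block is Fredholm, while the abandoned detour via Lemma~\ref{L:Qpm} at the end should simply be deleted from a final write-up since you (rightly) conclude it does not cleanly give the sign.
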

\begin{proof}
By Proposition~\ref{P:unitarity}, $Q$ is a unitary operator. Hence, it is Fredholm with index 0. By Proposition~\ref{P:compactness}, 
\[
	Q\ - \ \begin{pmatrix}
	Q_{++}&0\\0&Q_{--}
	\end{pmatrix}
\]
is a compact operator. Hence,
\[
	0 \ = \ \ind Q \ = \ \ind \begin{pmatrix}
	Q_{++}&0\\0&Q_{--}
	\end{pmatrix}
	\ = \ \ind Q_{++}\ + \ \ind Q_{--}.
\]
\end{proof}

\subsection{Sketch of the proof of Proposition~\ref{P:compactness}}\label{SS:sketchcompactness}
It is enough to prove compactness of $Q_{-+}$. The proof for $Q_{+-}$ is analogous. 

If $\Sigma$ is a closed manifold, the compactness of $Q_{-+}$ follows from Lemma~\ref{L:Qpms} and the Rellich lemma. In our non-compact setting we need to study the behavior of $Q_{-+}$ at infinity. Using  the cut-off functions of  Section~\ref{SS:cutoff} we write 
\begin{equation}\label{E:decompQ}
	Q_{-+}\ = \ \phi\circ Q_{-+}\circ \psi 
	\ + \ \phi\circ Q_{-+}\circ (1-\psi)
	\ + \ (1-\phi)\circ Q_{-+}\circ \psi
	\ + \ (1-\phi)\circ Q_{-+}\circ (1-\psi).
\end{equation}

We study each summand in the right hand side separately. 

\subsubsection*{The first summand} Since supports of $\phi$ and $\psi$ are compact, the compactness of the first summand follows from Lemma~\ref{L:Qpms} and the  Rellich lemma. 

To study the other summands we first prove (Lemma~\ref{L:commutator}) that the commutator of the spectral projections with a compactly  supported function is compact. We write $A\equiv B$ if the operators $A$ and $B$ are {\em equal modulo compacts}, i.e., if the operator $A-B$ is compact.

\subsubsection*{The second summand}
By Lemma~\ref{L:commutator}, it is enough to show that the second summand in \eqref{E:decompQ} is compact it is enough to show that the operator $\phi\circ Q\circ (1-\psi)$ is compact. But this operator is equal to 0 because of the finite propagation speed property of $Q$. 

\subsubsection*{The third summand}
Let $\tilde{\psi}$ be a compactly supported function, whose restriction to $K'''$ is identically equal to 1.  By finite propagation speed propery we have $Q\circ\psi\ = \ \tilde{\psi}\circ Q\circ \psi$. Using this fact and   Lemma~\ref{L:commutator} it is easy to check (cf. Lemma~\ref{L:thirdterm}) that the third term is equal modulo compacts to $(1-\phi)\tilde{\psi}\circ Q_{-+}\circ \psi$. Hence, it is compact by combination of Lemma~\ref{L:Qpms} and the Rellich lemma. 

\subsubsection*{The forth term} 
The last  term in \eqref{E:decompQ} is supported on $M\backslash([0,1]\times{}K)$. The restriction of $\AA_t$ is independent of $t$. It follows, cf. \eqref{E:extension},  that 
\[
	(1-\phi)\circ Q\circ(1-\psi)\ = \ 
	(1-\phi)\circ e^{i\AA_0}\circ(1-\psi).
\]
Using Lemma~\ref{L:commutator} it is  easy to see that it suffices to prove that the operator 
\[
	P_{(-\infty,0]}^{0} \circ  e^{i\AA_0} \circ  
	  P_{[0,\infty)}^1\circ (1-\psi)
\] 
is compact. This is done by an explicit computation in the proof of Lemma~\ref{L:infinitepart}. 

\medskip
The rest of this section is occupied with the details of the proof of Proposition~\ref{P:compactness}.

\begin{lemma}\label{L:commutator}
If $f$ is a smooth function with compact support on $\Sigma$ then the commutators $[P_{(-\infty,0)}^0,f]$ and $[P_{[0,\infty)}^0,f]$ are compact operators. Similar statements hold for the commutators with $P_{(-\infty,0]}^1$ and $P_{(0,\infty)}^1$.
\end{lemma}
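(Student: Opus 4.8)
The plan is to show that the commutator $[P_{(-\infty,0)}^0, f]$ is compact by writing the spectral projection as a contour integral (or via the functional calculus identity $P_{(-\infty,0)}^0 = \tfrac12(\id - \operatorname{sign}(\AA_0)) + \tfrac12 P_{\{0\}}^0$) and reducing to the compactness of commutators of $f$ with resolvents of $\AA_0$. The key point is that $\AA_0$ is a strongly Callias-type operator, so by \cite[\S3.10]{BrShi17} (the remark after Definition~\ref{D:Callias}) it has discrete spectrum and, more importantly, its resolvent $(\AA_0 - z)^{-1}$ is a compact operator mapping $L^2$ continuously into $H^1_{\AA_0}$, with the inclusion $H^1_{\AA_0} \hookrightarrow L^2$ compact (this is the ``Rellich lemma'' for the $\AA_0$-Sobolev scale, which underlies Lemma~\ref{L:Qpms} and its consequences in the closed case — here it holds because of the growth condition \eqref{E:strinvinfA}).

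First I would recall the commutator identity
\[
	[(\AA_0-z)^{-1}, f] \ = \ (\AA_0-z)^{-1}\,[f,\AA_0]\,(\AA_0-z)^{-1},
\]
valid since $[\AA_0,(\AA_0-z)^{-1}]=0$, and observe that $[f,\AA_0]$ is a zeroth-order operator (Clifford multiplication by $df$), hence bounded on $L^2$ — in fact compactly supported, so it maps $L^2$ into $H^s_{\AA_0}$-sections supported in $\supp f$, whose inclusion into $L^2$ is compact. Therefore $[(\AA_0-z)^{-1},f]$ is compact for $z \notin \operatorname{spec}\AA_0$. Next I would represent the spectral projection. Using that $\operatorname{spec}\AA_0$ is discrete with finite-dimensional eigenspaces, one writes $P_{(-\infty,0)}^0$ as a norm-convergent sum/integral built from such resolvents (e.g. a Cauchy integral over a contour separating the negative from the nonnegative spectrum, handled carefully at infinity using the spectral gap, or equivalently $\operatorname{sign}(\AA_0) = \tfrac{2}{\pi}\int_0^\infty \AA_0(\AA_0^2 + \lambda^2)^{-1}\,d\lambda$ with the analogous commutator formula $[\AA_0(\AA_0^2+\lambda^2)^{-1}, f]$, which again factors through a compactly supported zeroth-order operator sandwiched between two resolvents). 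Since the space of compact operators is norm-closed, the limit (integral) remains compact, and adding the finite-rank correction $\tfrac12[P_{\{0\}}^0,f]$ changes nothing. This gives compactness of $[P_{(-\infty,0)}^0,f]$; that of $[P_{[0,\infty)}^0,f]$ follows since the two projections sum to the identity, and the statements for $\AA_1$ are identical because $\AA_1$ is also a strongly Callias-type operator by Assumption~(A4).

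The main obstacle I anticipate is making the contour/resolvent representation of the spectral projection rigorous \emph{at infinity}: one must control the integral defining $\operatorname{sign}(\AA_0)$ (or the Cauchy integral) in operator norm uniformly, using the lower bound coming from \eqref{E:strinvinfA} to ensure the relevant resolvents decay appropriately in $\lambda$, so that the integral of compacts converges in the compact-operator norm rather than merely strongly. An alternative route that sidesteps some of this is to use the $\operatorname{sign}$-function representation together with the fact that $f\,(\AA_0^2+\lambda^2)^{-1/2}$ and its adjoint-type analogues are compact (again by the Callias-type Rellich property), and to invoke a standard abstract lemma: if $g \in C(\RR)$ tends to finite limits at $\pm\infty$ and $f\,[\text{resolvents of }\AA_0]$ is compact, then $[g(\AA_0), f]$ is compact; the characteristic function of $(-\infty,0)$ is such a $g$ up to a finite-rank term at $0$. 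Either way, all inputs — discreteness of $\operatorname{spec}\AA_0$, the compact embedding of the $\AA_0$-Sobolev spaces, and the zeroth-order nature of $[f,\AA_0]$ — are supplied by the strongly Callias-type hypothesis and the results cited in the excerpt.
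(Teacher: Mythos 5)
Your proposal is correct and takes essentially the same route as the paper: represent the spectral projection by a contour integral of resolvents, use the commutator identity $[R_{\AA_0}(\lambda),f]=R_{\AA_0}(\lambda)\,c(df)\,R_{\AA_0}(\lambda)$ together with Rellich (compact support of $df$) to get compactness of each integrand, and observe that the extra resolvent in the commutator gives the decay at infinity needed for absolute convergence. The paper implements the last point via the $\lambda^s$ regularization (the integral for $(P_{[0,\infty)}^0\AA_0)^s$ converges for $\RE s<0$, the commutator for $\RE s<1$, then set $s=0$), while you invoke the sign-function/Cauchy-integral picture and norm-closedness of the compacts — same idea, cosmetically different packaging.
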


\begin{proof}
We only prove that $[P_{[0,\infty)}^0,f]$ is compact. The proof for the other 3 commutators is analogous. 

Let $\gamma$ be a contour in complex plane going around the non-negative part of the spectrum of $\AA_0$  in counterclockwise direction which is the union of 3 curves: $\gamma_1:= \{r\cdot e^{i\pi/4}:\,\epsilon\le r<\infty\}$, \ $\gamma_2:= \{\epsilon\cdot e^{i\psi}:\,\pi/4\le \psi\le 7\pi/4\}$, and  $\gamma_3:= \{r\cdot e^{7i\pi/4}:\,\epsilon\le r<\infty\}$. 

For $\lambda$ not in the spectrum of $\AA_0$, 
let $R_{\AA_0}(\lambda):= \big(\lambda-\AA_0\big)^{-1}$ denote the resolvent. Since the operator $\AA_0$ is self-adjoint, we have
\begin{equation}\label{E:normR}
	\big\|\,R_{\AA_0}(\lambda)\,\big\| \ \le \  |\IM \lambda\,|^{-1}, 
\end{equation}
Hence, the integral
\[
	\frac{1}{2\pi i}\, \int_{\gamma}\,\lambda^s\, R_{\AA_0}(\lambda)\, d\lambda, 
\]
is absolutely convergent for $\RE s<0$ and, by functional calculus, is equal to $\big(P_{[0,\infty)}^0A_0\big)^s$.

We have
\begin{equation}\label{E:RDf}
	[R_{\AA_0}(\lambda),f] \ = \ R_{\AA_0}(\lambda)\,[\AA_1,f]\,R_{\AA_0}(\lambda)
	\ = \ 
	R_{\AA_0}(\lambda)\,c(df)\, R_{\AA_0}(\lambda).
\end{equation}

Since $df$ has compact support, it follows from Rellich's Lemma that $c(df) R_{\AA_0}(\lambda)$ is compact.  Hence $[R_{\AA_0}(\lambda),f]$ is also compact. It follows from \eqref{E:RDf} and \eqref{E:normR} that 
\[
	\left[\big(P_{[0,\infty)}^{t_0}A_0\big)^s,f\right]\ =\ 
	\frac{1}{2\pi i}\, 
	  \int_{\gamma}\, \lambda^s\,[R_{\AA_0}(\lambda),f]\: d\lambda
\]
is absolutely convergent for $\RE s<1$ and compact. Hence, 
\[
	[\big(P_{[0,\infty)}^0,f]\ = \ 
	\left[\big(P_{[0,\infty)}^0A_0\big)^s,f\right]_{\big|{s=0}}
\]
is compact.
\end{proof}

\begin{lemma}\label{L:secondterm}
The second term in the right hand side of  \eqref{E:decompQ} is compact.
\end{lemma}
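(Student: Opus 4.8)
The plan is to reduce the second summand $\phi\circ Q_{-+}\circ(1-\psi)$ in \eqref{E:decompQ} to the operator $\phi\circ Q\circ(1-\psi)$, which vanishes identically by finite propagation speed, while controlling the error terms by Lemma~\ref{L:commutator}. Recall from the matrix decomposition \eqref{E:Qmatrix}--\eqref{E:spdecomposition} that, viewed as an operator on $L^2$,
\[
	Q_{-+}\ = \ P_{(-\infty,0]}^1\circ Q\circ P_{[0,\infty)}^0,
\]
so that $\phi\circ Q_{-+}\circ(1-\psi) = \phi\circ P_{(-\infty,0]}^1\circ Q\circ P_{[0,\infty)}^0\circ(1-\psi)$.

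First I would commute the two cut-off functions past the adjacent spectral projections. Since $\phi$ is smooth and compactly supported, $[P_{(-\infty,0]}^1,\phi]$ is compact by Lemma~\ref{L:commutator}; since $\psi$ is smooth and compactly supported, $[P_{[0,\infty)}^0,1-\psi]=-[P_{[0,\infty)}^0,\psi]$ is compact by the same lemma. As $Q$ and the spectral projections are bounded, replacing $\phi\circ P_{(-\infty,0]}^1$ by $P_{(-\infty,0]}^1\circ\phi$ and $P_{[0,\infty)}^0\circ(1-\psi)$ by $(1-\psi)\circ P_{[0,\infty)}^0$ changes $\phi\circ Q_{-+}\circ(1-\psi)$ only by a sum of operators of the form $(\text{compact})\circ(\text{bounded})$ and $(\text{bounded})\circ(\text{compact})$. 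Hence
\[
	\phi\circ Q_{-+}\circ(1-\psi)\ \equiv\ P_{(-\infty,0]}^1\circ\big(\phi\circ Q\circ(1-\psi)\big)\circ P_{[0,\infty)}^0 .
\]

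It then remains to show $\phi\circ Q\circ(1-\psi)=0$. For $\bfu_0\in C^\infty_c(\Sigma_0,E_0)$ the section $(1-\psi)\bfu_0$ is supported in $\supp(1-\psi)\subset\overline{\Sigma\setminus K''}$, so \eqref{E:disttosuppphi} together with Definition~\ref{D:minmetric} gives $\dist\big(\supp\phi,\supp((1-\psi)\bfu_0)\big)>2$. Since $\bfu_0$ lies in $\HAs{0}$ for every $s$, Corollary~\ref{C:restriction} produces $\bfu:=\E'_0\big((1-\psi)\bfu_0\big)\in FE^s(M,\ker\DD)$ with $\bfu(0,\cdot)=(1-\psi)\bfu_0$ and $\DD\bfu=0$; for $s$ large enough (arguing as in the proof of Proposition~\ref{P:unitarity}) $\bfu$ is regular enough, e.g. $\bfu\in H^s_{\loc}(M,E)$, that Proposition~\ref{P:finiteps} applies and yields $\supp\bfu\subset\{(t,x):\dist(x,\supp((1-\psi)\bfu_0))\le t\}$. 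Restricting to $t=1$, the support of $Q\big((1-\psi)\bfu_0\big)=\bfu(1,\cdot)$ lies in the $1$-neighbourhood of $\supp((1-\psi)\bfu_0)$, which is disjoint from $\supp\phi$; hence $\phi\cdot Q\big((1-\psi)\bfu_0\big)=0$. By density of $C^\infty_c(\Sigma_0,E_0)$ in $L^2(\Sigma_0,E_0)$ and boundedness of $Q$, this gives $\phi\circ Q\circ(1-\psi)=0$ on all of $L^2(\Sigma_0,E_0)$, and combining with the congruence above we conclude that $\phi\circ Q_{-+}\circ(1-\psi)$ is compact.

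The argument is essentially bookkeeping rather than conceptually hard; the one point needing care is that, when moving the non-compactly supported factor $1-\psi$ through $P_{[0,\infty)}^0$, one must rewrite the commutator in terms of the compactly supported function $\psi$ so that Lemma~\ref{L:commutator} is applicable. The vanishing of $\phi\circ Q\circ(1-\psi)$ via finite propagation speed and the density reduction are routine.
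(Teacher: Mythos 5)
Your proof is correct and follows the same route as the paper's: use Lemma~\ref{L:commutator} to commute the cut-offs past the spectral projections modulo compacts, reducing to $\phi\circ Q\circ(1-\psi)$, which vanishes by finite propagation speed since $\dist(\supp\phi,\supp(1-\psi))>2>1$. You additionally supply details the paper leaves implicit (rewriting $[P^0_{[0,\infty)},1-\psi]=-[P^0_{[0,\infty)},\psi]$ so the commutator lemma applies to a compactly supported function, and the density reduction from $C^\infty_c$ to $L^2$), and your projections $P^1_{(-\infty,0]}\circ Q\circ P^0_{[0,\infty)}$ for $Q_{-+}$ carry the correct signs and superscripts, which are garbled in the paper's displayed formula.
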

\begin{proof}
The finite propagation speed for the solution of the wave equation implies that the support of $Q\big((1-\psi)\bfu_0\big)$ does not intersect support of $\phi$ for all $\bfu_0\in \LS{0}$. Hence, using Lemma~\ref{L:commutator}, we obtain
\[
	\phi\circ Q_{-+}\circ (1-\psi) \ \equiv \ 
	P_{(0,\infty)}^1\circ\big(\, \phi\circ Q\circ(1-\psi)\,\big)\circ 
	P_{(-\infty,0)}\ = \ 0,
\]
where ``$\equiv$" denote equality modulo compact operators. 
\end{proof}

\begin{lemma}\label{L:thirdterm}
The third term in the right hand side of \eqref{E:decompQ} is compact.
\end{lemma}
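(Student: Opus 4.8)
The plan is to follow the strategy outlined in the sketch in Section~\ref{SS:sketchcompactness}: reduce the third term $(1-\phi)\circ Q_{-+}\circ\psi$ modulo compacts to an operator of the form $(1-\phi)\,\tilde\psi\circ Q_{-+}\circ\psi$ with $\tilde\psi$ compactly supported, which is then compact by Lemma~\ref{L:Qpms} and the Rellich lemma. First I would invoke the finite propagation speed property (Proposition~\ref{P:finiteps}): since $\supp\psi\subset K''$ and $\dist(\supp\psi,M\backslash K''')>2\ge 1$ by \eqref{E:K'''}, the solution of $\DD\bfu=0$ with Cauchy data $\psi\bfu_0$ on $\Sigma_0$ has support inside $[0,1]\times K'''$; restricting to $\Sigma_1$ gives $\supp(Q\psi\bfu_0)\subset K'''$. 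Hence, choosing a smooth compactly supported $\tilde\psi:\Sigma\to[0,1]$ with $\tilde\psi|_{K'''}\equiv 1$, we have the exact identity $Q\circ\psi=\tilde\psi\circ Q\circ\psi$.

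Next I would unwind the definition of $Q_{-+}$. By definition (Section~\ref{SS:spectralspaces}), $Q_{-+}=P^1_{(-\infty,0]}\circ Q\circ P^0_{[0,\infty)}$, so the third summand is
\[
	(1-\phi)\circ Q_{-+}\circ\psi
	\ = \
	(1-\phi)\circ P^1_{(-\infty,0]}\circ Q\circ P^0_{[0,\infty)}\circ\psi.
\]
I want to move $\psi$ past $P^0_{[0,\infty)}$ and insert $\tilde\psi$ after $Q$. Using Lemma~\ref{L:commutator}, $[P^0_{[0,\infty)},\psi]$ is compact, so $P^0_{[0,\infty)}\circ\psi\equiv\psi\circ P^0_{[0,\infty)}$ (modulo compacts); since $Q$ is bounded (Proposition~\ref{P:unitarity}) and $(1-\phi)\circ P^1_{(-\infty,0]}$ is bounded, composing with bounded operators preserves equality modulo compacts, giving
\[
	(1-\phi)\circ Q_{-+}\circ\psi
	\ \equiv\
	(1-\phi)\circ P^1_{(-\infty,0]}\circ Q\circ\psi\circ P^0_{[0,\infty)}.
\]
Now apply the finite-propagation identity $Q\circ\psi=\tilde\psi\circ Q\circ\psi$ to rewrite the middle as $(1-\phi)\circ P^1_{(-\infty,0]}\circ\tilde\psi\circ Q\circ\psi\circ P^0_{[0,\infty)}$, and then use Lemma~\ref{L:commutator} again to commute $\tilde\psi$ past $P^1_{(-\infty,0]}$, obtaining modulo compacts $(1-\phi)\,\tilde\psi\circ P^1_{(-\infty,0]}\circ Q\circ P^0_{[0,\infty)}\circ\psi=(1-\phi)\,\tilde\psi\circ Q_{-+}\circ\psi$.

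Finally I would argue compactness of $(1-\phi)\,\tilde\psi\circ Q_{-+}\circ\psi$. Since $\psi$ has compact support, $Q_{-+}\circ\psi=Q_{-+}\circ P^0_{[0,\infty)}\circ\psi$ maps $L^2(\Sigma_0,E_0)$ into $H^1_{\loc}(\Sigma_1,E_1)$ by Lemma~\ref{L:Qpms} (noting $\psi\bfu_0\in H^0_{\comp}$); multiplying by the compactly supported function $(1-\phi)\tilde\psi$ (supported in the compact set $\supp\tilde\psi$) lands us in $H^1_{\comp}$ of a fixed compact set, and the inclusion $H^1_{\comp}(L)\hookrightarrow L^2$ for a compact $L$ is compact by the Rellich lemma. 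Hence the operator is compact, and therefore so is the third summand. The only mildly delicate point — and the one I would be most careful about — is the bookkeeping in the commutator manipulations: one must check at each step that the factor being moved is either compactly supported (so Lemma~\ref{L:commutator} applies) or bounded (so that $\equiv$ is preserved under composition), and that $\tilde\psi$ is chosen so that both $\tilde\psi|_{K'''}\equiv 1$ (to get the finite-propagation identity) and $\tilde\psi$ is compactly supported (to apply Rellich); these requirements are compatible since $K'''$ is compact.
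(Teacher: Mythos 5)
Your proof is correct and takes essentially the same route as the paper: insert $\tilde\psi$ via the finite--propagation identity $Q\circ\psi=\tilde\psi\circ Q\circ\psi$, commute cut-offs past the spectral projections modulo compacts using Lemma~\ref{L:commutator}, and finish with Lemma~\ref{L:Qpms} plus Rellich. (Minor slip: it is not $\supp\psi\subset K''$ but $\psi|_{K''}\equiv 1$; the estimate you actually use, $\dist(\supp\psi,M\setminus K''')>2$, is the correct one and is all that matters.)
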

\begin{proof}
Recall that the compact set $K'''$ was defined in \eqref{E:K'''}.
Let $\tilde{\psi}$ be a compactly supported function on $\Sigma$ whose restriction to $K'''$ is equal to 1. Then the finite propagation speed for the solutions of the wave equation implies that 
$Q\circ\psi = \tilde{\psi}\circ Q\circ \psi$.
Hence,
\begin{multline}\notag
	(1-\phi)\circ Q_{-+} \circ \psi\ \equiv \ 
	P_{(-\infty,0]}^0\circ(1-\phi)\circ Q \circ 
		\psi\circ P_{[0,\infty)}^1
	\\ = \
	P_{(-\infty,0]}^0\circ(1-\phi)\tilde{\psi}\circ Q 
		\circ \psi\circ P_{[0,\infty)}(t_1)
	\ \equiv \ 
	(1-\phi)\tilde{\psi}\circ Q_{-+} \circ \psi.
\end{multline}
The assertion of the lemma follows now from \eqref{E:HcomHloc} and the Rellich lemma.
\end{proof}

\begin{lemma}\label{L:infinitepart}
The last  term in the right hand side of \eqref{E:decompQ} is compact.
\end{lemma}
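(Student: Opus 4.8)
By the reductions in Section~\ref{SS:sketchcompactness} and Lemma~\ref{L:commutator}, the last term of \eqref{E:decompQ} equals, modulo compact operators, the operator
\[
	P_{(-\infty,0]}^{0}\circ e^{i\AA_0}\circ P_{[0,\infty)}^{1}\circ(1-\psi),
\]
so it suffices to prove that this operator is compact. The plan is to exploit that, on the region $M\backslash\big([0,1]\times K\big)$, we are dealing with a single fixed strongly Callias-type operator $\AA_0$ on $\Sigma\backslash K$, so that $\AA_1=\AA_0$ and the two spectral splittings in \eqref{E:spdecomposition} are \emph{almost} the same splitting of the same Hilbert space, and the evolution is literally the unitary $e^{i\AA_0}$ generated by $\AA_0$. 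The only discrepancy between the two splittings is in how the kernel of $\AA_0$ is distributed: in \eqref{E:spdecomposition} the kernel goes into $\LSI{0}{[0,\infty)}$ for $\Sigma_0$ but into $\LSI{0}{(-\infty,0]}$ for $\Sigma_1$.

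\textbf{Step 1: reduce to the kernel.} Write $P_{[0,\infty)}^{1}=P_{(0,\infty)}^{1}$ (there is no index shift intended; rather $P^1_{[0,\infty)}$ in the source is the projection onto nonnegative eigenvalues of $\AA_1=\AA_0$). Then $P^1_{[0,\infty)}=P^0_{(0,\infty)}+P^0_{\{0\}}$, where $P^0_{\{0\}}$ is the projection onto $\ker\AA_0$. Since $\AA_0$ is a strongly Callias-type operator, its spectrum is discrete (see \cite[\S3.10]{BrShi17}), hence $\ker\AA_0$ is finite-dimensional and $P^0_{\{0\}}$ is a finite-rank, hence compact, operator. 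Therefore, modulo compacts,
\[
	P_{(-\infty,0]}^{0}\circ e^{i\AA_0}\circ P^1_{[0,\infty)}\circ(1-\psi)
	\ \equiv\
	P_{(-\infty,0]}^{0}\circ e^{i\AA_0}\circ P^0_{(0,\infty)}\circ(1-\psi).
\]
Now the two projections bracketing $e^{i\AA_0}$ project onto complementary parts of the spectral decomposition of the \emph{same} self-adjoint operator $\AA_0$, and $e^{i\AA_0}$ is a function of $\AA_0$ which therefore commutes with both projections. Hence $P_{(-\infty,0]}^{0}\, e^{i\AA_0}\, P^0_{(0,\infty)} = e^{i\AA_0}\, P_{(-\infty,0]}^{0}\, P^0_{(0,\infty)} = 0$, since $P_{(-\infty,0]}^{0}$ and $P^0_{(0,\infty)}$ are orthogonal. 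So the operator in question is compact (in fact zero modulo compacts).

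\textbf{Step 2: justify the use of $\AA_0$ on the region at infinity.} The one point requiring care is that the source manifold carries $\AA_0$ and the target carries $\AA_1$, and a priori these act on different $L^2$ spaces $\LS{0}$ and $\LS{1}$; the identification of $(1-\psi)$-localized sections is via the product structure of Assumptions~(A1)--(A4), which gives an isometry of $\LS{0}$ and $\LS{1}$ restricted to $\Sigma\backslash K$ under which $\AA_0$ and $\AA_1$ correspond. This is exactly the content of \eqref{E:restrictionDD} together with \eqref{E:extension}: on sections supported in $M\backslash\big([0,1]\times K\big)$ the evolution operator $Q$ restricted to such sections is $e^{i\AA_0}$, where $\AA_0$ is now viewed as a fixed operator on $\Sigma\backslash K$. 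Since $(1-\psi)$ is supported in $\Sigma\backslash K''\subset \Sigma\backslash K'\subset\Sigma\backslash K$, and by finite propagation speed $Q$ keeps such sections supported in $M\backslash\big([0,1]\times K\big)$, the replacement of $P^1_{[0,\infty)}\circ Q$ by $P^1_{[0,\infty)}\circ e^{i\AA_0}$ on the range of $(1-\psi)$ is exact. Combined with Step~1, this completes the proof.

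\textbf{Expected main obstacle.} The genuinely delicate point is not the algebra of Step~1 but the bookkeeping of Step~2: one must be sure that the two projections $P^0_{(-\infty,0]}$ and $P^1_{[0,\infty)}$, defined on a priori distinct Hilbert spaces, really do correspond under the product identification to complementary spectral projections of the \emph{same} operator, so that their composition with $e^{i\AA_0}$ vanishes up to the finite-rank kernel contribution. Once the product structure at infinity is correctly invoked — and the discreteness of the spectrum of $\AA_0$ is used to dispose of $\ker\AA_0$ — the compactness is immediate.
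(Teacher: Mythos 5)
Your Step 1 rests on the claim that $\AA_1=\AA_0$, and this is false. Assumptions (A1)--(A4) only force the metric, connection, and potential to be $t$-independent \emph{outside} $[0,1]\times K$; on $K$ itself the family $g^\Sigma_t$ (and possibly $\Phi_t$) genuinely varies, so $\AA_0$ and $\AA_1$ are different operators on $\Sigma$. In particular the identity $P^1_{[0,\infty)}=P^0_{(0,\infty)}+P^0_{\{0\}}$ does not hold: $P^1_{[0,\infty)}$ is a spectral projection of $\AA_1$, not of $\AA_0$, and it is a nonlocal object that depends on the global operator, including its restriction to $K$. Consequently $P^0_{(-\infty,0]}$ and $P^1_{[0,\infty)}$ are \emph{not} complementary spectral projections of a single self-adjoint operator, $e^{i\AA_0}$ need not commute with $P^1_{[0,\infty)}$, and the orthogonality that makes your product vanish is simply not available. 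Your Step~2 acknowledges this as the delicate point but the product structure at infinity does not fix it: identifying $\AA_0$ and $\AA_1$ as differential operators on $\Sigma\backslash K$ says nothing about their spectral projections, which see what happens on $K$.

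What you would actually need is that $\bigl(P^1_{[0,\infty)}-P^0_{[0,\infty)}\bigr)\circ(1-\psi)$ is compact, and this is the content of the paper's argument. The paper introduces the family $S(t)=P^0_{(-\infty,0]}\circ e^{i\AA_0}\circ P^t_{[0,\infty)}\circ(1-\psi)$, observes that $S(0)$ is finite rank (essentially your kernel observation, which is correct), and then shows that $S(t)$ is differentiable modulo compact operators with derivative compact. The computation uses the Riesz integral for $P^t_{[0,\infty)}$: its $t$-derivative produces $R_{\AA_t}(\lambda)\,\frac{d\AA_t}{dt}\,R_{\AA_t}(\lambda)$, and the crucial point is that $\frac{d\AA_t}{dt}$ is supported inside $K$ and hence is annihilated by $(1-\psi)$ up to commutator terms that are compact by Lemma~\ref{L:commutator}. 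This deformation argument is precisely the missing ingredient that turns your heuristic ``the two splittings are almost the same'' into a theorem; without it the proof has a genuine gap.
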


\begin{proof}
Using Lemma~\ref{L:commutator} we obtain 
\begin{equation}\label{E:1-psi}
	(1-\phi)\circ Q_{-+}\circ (1-\psi)\ \equiv \ 
	P_{(-\infty,0]}^0\circ(1-\phi)\circ Q \circ 
		(1-\psi)\circ P_{[0,\infty)}^1.
\end{equation}
By \eqref{E:extension} and \eqref{E:evolutionQ}, for a section $\bfu_0$ supported outside of $K'$ we have 
\[
	Q\,\bfu_0\ = \ e^{i\AA_0}\,\bfu_0.
\]
Hence, from \eqref{E:1-psi} and  Lemma~\ref{L:commutator} we obtain	
\begin{multline}\label{E:1-psi2}
	(1-\phi)\circ Q_{-+}\circ (1-\psi)\ \equiv \ 
	P_{(-\infty,0]}^{0}\circ(1-\phi) 
	\circ e^{i\AA_0} \circ
	(1-\psi)\circ P_{[0,\infty)}^1
	\\ \equiv \ 
	(1-\phi)\circ 
	 P_{(-\infty,0]}^{0}
	  \circ  e^{i\AA_0} \circ  
	  P_{[0,\infty)}^1\circ (1-\psi).
\end{multline}

Consider the family of operators 
\[
	S(t)\ := \ 
	P_{(-\infty,0]}^{0}
	\circ  e^{i\AA_0} \circ  
	P_{[0,\infty)}^t\circ (1-\psi), \qquad t_0\le t\le t_1.
\]
Then $S(t_0)$ is equal to  $(1-\psi)$ times the projection onto the kernel of $\AA_{t_0}$. Hence, $S(t_0)$ is a compact (even finite rank) operator. We will show that $S(t)$ is compact for all $t$. The family $S(t)$ is not continuous at the points where some eigenvalues of the family $\AA_t$ cross 0. However, since there are finitely many such eigenvalues, $S(t)$ is continuous (and, as we shall see below, even smooth) modulo compacts. To explore this, for each $t_*\in [t_0,t_1]$ fix a contour $\gamma_{t_*}$ as in the proof of Lemma~\ref{L:commutator} which encloses the non-negative spectrum of $\AA_{t_*}$. Then there is $\epsilon>0$ such that for all $t\in (t_*-\epsilon,t_*+\epsilon)$ the spectrum of $\AA_t$ is disjoint from $\gamma_{t_*}$ and there are at most finitely many positive eigenvalues of $\AA_t$ inside $\gamma_{t_*}$. Thus 
\begin{equation}\label{E:dPequiv}
		\frac{1}{2\pi i}\, \int_{\gamma_{t_*}}\,\lambda^s\, R_{\AA_t}(\lambda)\, d\lambda
	\ \equiv \ \big(P_{[0,\infty)}^t\AA_t\big)^s, 
	\qquad s<0, \ \ t_*-\epsilon<t<t_*+\epsilon.
\end{equation}	 
We now compute the derivative of the left hand side of this equation:  
\begin{equation}\label{E:ddtReitz}
	\frac{d}{dt}\,\left(\,
	\frac{1}{2\pi i}\, \int_{\gamma_{t_*}}\,\lambda^s\, R_{\AA_t}(\lambda)\, d\lambda\,\right)
	\ = \ 
	-\frac{1}{2\pi i}\, \int_{\gamma_{t_*}}\,\lambda^s\, R_{\AA_t}(\lambda)\circ
			\frac{d\AA_t}{dt}\circ R_{\AA_t}(\lambda)    \, d\lambda.
\end{equation}
The integral in the right hand side is absolutely convergent for $\RE s<1$. Thus
\begin{equation}\label{E:ddtP}
	\frac{d}{dt}\,P_{[0,\infty)} 
	\ = \ 
		\frac{d}{dt}\,\big(P_{[0,\infty)}^t\AA_t\big)^0
	 \ \equiv \
	-\frac{1}{2\pi i}\, \int_{\gamma_{t_*}}\, 
	R_{\AA_t}(\lambda)\circ
			\frac{d\AA_t}{dt}\circ R_{\AA_t}(\lambda)    \, d\lambda.
\end{equation}
Since $\frac{d\AA_t}{dt}$ is supported outside of the support of $(1-\psi)$, we have $\frac{d\AA_t}{dt}\circ(1-\psi)=0$. Hence, using Lemma~\ref{L:commutator}, we obtain 
\begin{equation}\label{E:ddtrietzpsi}
	\frac{d}{dt}\,P_{[0,\infty)}^t\ \circ (1-\psi)
	\ \equiv \ 
	-\frac{1}{2\pi i}\, \int_{\gamma_{t_*}}\,\lambda^s\, R_{\AA_t}(\lambda)\circ
			\left(\,\frac{d\AA_t}{dt}\circ(1-\psi)\,\right) R_{\AA_t}(\lambda)    \, d\lambda \ = \ 0.
\end{equation}
Hence, $S(t)= P_{(-\infty,0]}^{t_0}\circ e^{i\AA
_0}\circ P_{[0,\infty)}^t\circ (1-\psi) $ is differentiable modulo compacts and its derivative is 0 modulo compacts. Since $S(t_0)$ is compact, it follows that so is $S(t_1)$.
\end{proof}

\section{The APS index formula}\label{S:APSindex}

In this section we show that the Atiyah-Patodi-Singer (APS) boundary value problem for the Lorentzian strongly Callias-type operator $\DD$ is Fredholm and that its index is equal to the index of the operator $Q_{--}$, which we studied in the previous section. We then introduce a Riemanninan Dirac operator $\cDD$, obtained from $\DD$ by the ``Wick rotation". We show that the APS index of $\DD$ is equal to the APS index of $\cDD$. This leads to an explicit formula for the APS index of $\DD$ in terms of the relative eta-invariant introduced in \cite{BrShi17,BrShi17b}.

\subsection{The Atiyah-Patodi-Singer boundary conditions}\label{SS:APSbc}
We define the space 
\begin{equation}\label{E:FEAPS}
	FE^s_{APS}(M,\DD)\ := \ 
	\big\{\,\bfu\in FE^s(M,\DD):\, 
	  P_{[0,\infty)}^{0}\bfu_{0}=0= P_{(-\infty,0]}\bfu_{t_2}\,\big\}
\end{equation}
of {\em finite $s$-energy sections, which satisfy the Atiyah-Patodi-Singer boundary conditions}.

We have the following analogue of Theorem~3.3 of \cite{BarStrohmaier15}:
\begin{theorem}\label{T:APSindex}
The operator
\begin{equation}\label{E:APSindex}
	\DD_{APS}\ := \ \DD|_{FE^0_{APS}(M,\DD)}:\,
	FE^0_{APS}(M,\DD)\ \to \ L^2(M,E)
\end{equation}
is Fredholm and its index satisfies
\begin{equation}\label{E:indAPS=indQ}
	\ind\,\DD^+_{APS}\ = \ \ind\, Q_{--}.
\end{equation}
\end{theorem}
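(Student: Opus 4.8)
The plan is to mimic the structure of \cite[\S3]{BarStrohmaier15}, using the results established in the previous sections as black boxes. The key input is the well-posedness of the inhomogeneous Cauchy problem (Theorem~\ref{T:restriction}) together with the unitarity of the evolution operator (Proposition~\ref{P:unitarity}) and the Fredholmness of the diagonal blocks $Q_{++}, Q_{--}$ obtained in Corollary~\ref{C:Fredholm}.

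First I would set up the right algebraic framework. Given $\bff\in L^2(M,E)$, by Theorem~\ref{T:restriction} applied at $t_0=0$, the affine space of solutions of $\DD\bfu=\bff$ in $FE^0(M,\DD)$ is parametrized by $\res_0\bfu\in \HAs[0]{0}=L^2(\Sigma_0,E_0)$; concretely, $\bfu = \E_0(\bfu_0,\bff)$. The APS boundary condition forces $\bfu_0\in L^2_{(-\infty,0)}(\Sigma_0,E_0)$ (i.e. $P_{[0,\infty)}^0\bfu_0=0$), and additionally forces the other endpoint condition $P_{(-\infty,0]}^1\bfu_1=0$, i.e. $\res_1\bfu \in L^2_{(0,\infty)}(\Sigma_1,E_1)$. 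Now $\res_1\E_0(\bfu_0,0) = Q\bfu_0$ and $\res_1\E_0(0,\bff)=:R\bff$ is the ``particular solution'' term, which is a bounded operator $L^2(M,E)\to L^2(\Sigma_1,E_1)$. So solvability of $\DD_{APS}\bfu = \bff$ with $\bfu$ in the APS domain is equivalent to solvability of
\[
	P_{(-\infty,0]}^1\big(Q\bfu_0 + R\bff\big)\ = \ 0,\qquad \bfu_0\in L^2_{(-\infty,0)}(\Sigma_0,E_0),
\]
and the kernel of $\DD_{APS}$ is $\{\E_0(\bfu_0,0):\bfu_0\in L^2_{(-\infty,0)}(\Sigma_0,E_0),\ Q\bfu_0\in L^2_{(0,\infty)}(\Sigma_1,E_1)\}$. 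The latter condition on $\bfu_0$ says exactly $P_{(-\infty,0]}^1 Q\bfu_0 = 0$, i.e. $\bfu_0\in \ker Q_{--}$ (since $Q_{-+}$ only sees the $[0,\infty)$ part, which is zero here). Hence $\ker\DD_{APS}\cong \ker Q_{--}$ via $\res_0$, which already gives $\dim\ker\DD^+_{APS}=\dim\ker Q_{--}$.

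Next I would identify the cokernel. The equation above is $\pi\circ Q\circ\iota\,\bfu_0 = -\pi R\bff$ where $\iota$ is the inclusion of $L^2_{(-\infty,0)}(\Sigma_0,E_0)$ and $\pi=P_{(-\infty,0]}^1$; but $\pi Q\iota = Q_{--}$ (again because the positive spectral part of the source is killed by $\iota$ and $Q_{--}$ is precisely $P^1_{(-\infty,0]}Q$ restricted to $L^2_{(-\infty,0)}(\Sigma_0)$ — here one must be slightly careful about the borderline eigenvalue $0$, but the splittings in \eqref{E:spdecomposition} are chosen precisely so $\ker\AA_0$ is in the source of $Q_{++},Q_{-+}$ and $\ker\AA_1$ is in the target of $Q_{-+},Q_{--}$, matching the APS conventions). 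Since $Q_{--}$ is Fredholm (Corollary~\ref{C:Fredholm}), the range of $\pi Q\iota$ is closed of finite codimension. One then checks that $\bff\mapsto \pi R\bff$ is onto $L^2_{(-\infty,0]}(\Sigma_1,E_1)$ — this follows because $R$ itself, composed with $\res_1$, hits all of $L^2(\Sigma_1,E_1)$: indeed $\E_1$ applied to an arbitrary $\bfv_1\in L^2(\Sigma_1)$ and $\bff=0$ produces a solution with $\res_0 = Q^{-1}\bfv_1$ lying in $FE^0(M,\ker\DD)\subset FE^0(M,\DD)$, and more relevantly, for any target boundary value one can build an $\bff$ realizing it. Consequently $\DD_{APS}$ is surjective onto the subspace complementary to whatever $\pi Q\iota$ misses, and $\operatorname{coker}\DD_{APS}\cong \operatorname{coker} Q_{--}$, so $\operatorname{ind}\DD^+_{APS}=\operatorname{ind}Q_{--}$. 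Fredholmness of $\DD_{APS}$ follows from closedness of the range and finiteness of kernel and cokernel, using continuity of $\E_0$ and $R$ from Theorem~\ref{T:restriction}.

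The main obstacle I anticipate is bookkeeping the zero eigenvalues and the precise choice of half-open spectral intervals so that $\pi Q\iota$ equals $Q_{--}$ on the nose rather than up to a finite-rank correction — the paper's splitting \eqref{E:spdecomposition} (with $[0,\infty)$ at $t=0$ and $(-\infty,0]$ at $t=1$) is engineered for exactly this, but one has to verify carefully that the APS domain condition $P^0_{[0,\infty)}\bfu_0=0$, $P^1_{(-\infty,0]}\bfu_1=0$ translates correctly. A secondary point is confirming that the ``particular solution'' map $\bff\mapsto R\bff$ and its composition with the spectral projection really is surjective onto the relevant half-space; this should be a soft consequence of Theorem~\ref{T:restriction} (existence of solutions with arbitrary prescribed data) combined with unitarity of $Q$, so I would isolate it as a short lemma. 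Apart from these, as the authors indicate, the argument is ``a verbatim repetition'' of \cite[\S3]{BarStrohmaier15} once the non-compact analogues of well-posedness, unitarity, and Fredholmness of the diagonal blocks are in hand, all of which are available from the preceding sections.
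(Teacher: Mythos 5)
Your proposal follows essentially the same route as the paper, which simply states that the B\"ar--Strohmaier argument applies verbatim once Theorem~\ref{T:restriction}, Proposition~\ref{P:unitarity}, and Corollary~\ref{C:Fredholm} are available; your reduction of the APS problem to $Q_{--}$ via $\res_1\bfu = Q\bfu_0 + R\bff$ is precisely that argument. One small correction to keep in mind for the lemma you mention: the map $\bff\mapsto P^1_{(-\infty,0]}R\bff$ is only densely-ranged in $L^2_{(-\infty,0]}(\Sigma_1,E_1)$, not necessarily onto (take $\bfu=\chi(t)\bfv_1$ with $\chi(0)=0$, $\chi(1)=1$, $\bfv_1$ smooth compactly supported, and $\bff:=\DD\bfu$), but density together with $\operatorname{ran} Q_{--}$ being closed of finite codimension already gives $\operatorname{coker}\DD_{APS}\cong\operatorname{coker} Q_{--}$, so your conclusion stands.
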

\begin{proof}
For compact $M$ the theorem is proven in \cite[\S3]{BarStrohmaier15}. The proof there only uses the formal properties of operators $\DD$ and $Q$. For non-compact case the same properties are proven in Sections~\ref{S:evolution} and \ref{S:propertiesevolution}. A verbatim repetition of the proof in \cite{BarStrohmaier15} proves the theorem. 
\end{proof}

\begin{remark}\label{R:antiAPS}
As in  \cite{BarStrohmaier15}, we can define the {\em anti-APS space}: 
\begin{equation}\label{E:FEantiAPS}
	FE^s_{aAPS}(M,\DD)\ := \ 
	\big\{\,\bfu\in FE^s(M,\DD):\, 
	  P_{(-\infty,0)}^{0}\bfu_{0}=0= P_{(0,\infty)}\bfu_{t_2}\,\big\}, 
\end{equation}
and the {\em anti-APS boundary problem}:
\begin{equation}\label{E:aAPSindex}
	\DD_{aAPS}\ := \ \DD|_{FE^0_{aAPS}(M,\DD)}:\,
	FE^0_{aAPS}(M,\DD)\ \to \ L^2(M,E).
\end{equation}
It was noted in \cite{BarStrohmaier15} that, if $M$ is compact, quite surprisingly  $\DD_{aAPS}$ is also Fredholm. The same proof shows that this result remains true in our non-compact situation and, as in \cite[Theorem~3.4]{BarStrohmaier15} we obtain
\begin{equation}\label{E:indaAPS=indQ}
	\ind\,\DD_{aAPS}\ = \ \ind\, Q_{++}.
\end{equation}
\end{remark}

\subsection{The Wick rotation}\label{SS:Wick}
We define the ``Wick rotation" of the Lorentzian metric \eqref{E:gM} by $\check{g}:= N^2dt^2+g_t^\Sigma$. This is a complete Riemannian metric on $M$.   Endow $E$ with the Clifford action $\cgamma:TM\to \End(E)$ such that 	
$\cgamma(v)= \gamma(v)$ for $v\in T\Sigma_t$ and $\cgamma(\nu)= i\gamma(\nu)$. 
Then $\cgamma(v)^2= -\check{g}(v,v)$ and $\cgamma(v)$ is skew-adjoint with respect to the Hermitian scalar product $\<\cdot,\cdot\>_E$ on $E$.

The bundle $E=E^+\oplus E^-$  endowed with connection $\n^E$ and Clifford action $\cgamma$ is a Dirac bundle over $(M,\check{g})$. Let $\check{D}:C^\infty(M,\check{S}M\otimes E)\to C^\infty(M,\check{S}M\otimes E)$ be the  Dirac operator associated to connection $\n^E$. This is a self-adjoint elliptic operator on $M$.  As in the Lorentzian case we set  $\cbeta:=\cgamma(\nu)$ and   define 
\begin{equation}\label{E:CalliasEuclid}
	\cDD\ := \ \check{D} \ + \ i\cbeta\otimes \Phi. 
\end{equation}
This is a self-adjoint strongly Callias-type operator in the sense of Definition~2.2 of \cite{BrShi17b}. 

Since the restriction of $\check{g}$ to each hypersurface $\Sigma_t$ is equal to $g_t^\Sigma$, along $\Sigma_t$ we have the following  analogue of \eqref{E:Calliasnear}
\begin{equation}\label{E:WickCalliasnear}
	\cDD\ =  \ 
	-\cbeta\,\Big(\,\n^{\check{S}M\otimes E}_\nu  + i\,\AA_t
	 - \frac{n}{2}\check{H}_t\,\Big).
\end{equation}
In particular, the restriction $\AA_t$ of $\cDD$ to $\Sigma_t$ coincides with the restriction of $\DD$. 

As in \eqref{E:restrictionDD}, we see that the restriction of $\cDD$ to $M\backslash\big([0,1]\times K\big)$ is equal to 
\begin{equation}\label{E:restrictionWickDD}
	\cDD|_{M\backslash\big([0,1]\times K\big)}\ = \ 
	-\beta\, \Big(-\frac{\p}{\p t}  + i\,\AA_0\,\Big).
\end{equation}

\subsection{The APS index of the elliptic Callias-type operator}\label{SS:APScDD}
Let $\cDD^+$ denote the restriction of $\cDD$ to $E^+$ and let  $\dom\cDD^+_{\max}$ denote the domain of the maximal extension of $\cDD^+$ (cf. \cite[\S2.2]{BrShi17} for more details).  
We denote by $\cDD_{APS}$ the restriction of $\cDD$ to the space of sections $\bfu\in \dom\cDD^+_{\max}$, satisfying the APS boundary conditions: $P_{[0,\infty)}^{0}\bfu_{0}=0= P_{(-\infty,0]}\bfu_{t_2}$
\begin{equation}\label{E:cDDAPS}
	\cDD^+_{APS}:\, 
	\big\{\, \bfu\in \dom\cDD^+_{\max}:\, 
	  P_{[0,\infty)}^{0}\bfu_{0}=0= P_{(-\infty,0]}\bfu_{t_2}\,\big\}
	\ \to \ 
	L^2(M,S^-M\otimes E).
\end{equation}
If the manifold $M$ is compact, then it is well-known that the opertor $\cDD^+_{APS}$ is Fredholm, cf., for example, \cite{BarBallmann12}. The case of non-compact manifolds was studied in \cite{BrShi17,BrShi17b}. Using an extension of the method of \cite{BarBallmann12} it is shown in \cite{BrShi17} that $\cDD^+_{APS}$ is Fredholm if $\cDD$ is a product near the boundary of $M$. 
Combining the arguments in \cite{BarBallmann12} and \cite{BrShi17} one immediately sees that $\cDD^+_{APS}$ is Fredholm if $\cDD$ is a product outside of a compact set in a neighborhood of the boundary of $M$.  Hence, by condition (ii) of Subsection~\ref{SS:glhyperbolic}, our operator $\cDD^+_{APS}$ is Fredholm. 
 
The main result of this section is the following

\begin{theorem}\label{T:indcheckD=indD}
$\displaystyle\ind\DD^+_{APS}\ = \ \ind\cDD^+_{APS}$.
\end{theorem}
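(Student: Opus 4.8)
The plan is to reduce the equality $\ind \DD^+_{APS} = \ind \cDD^+_{APS}$ to the index computation already set up for the Lorentzian operator, namely Theorem~\ref{T:APSindex}, which gives $\ind \DD^+_{APS} = \ind Q_{--}$. Thus it suffices to prove that $\ind \cDD^+_{APS} = \ind Q_{--}$ as well. The strategy, following Sections~4.1--4.2 of \cite{BarStrohmaier15}, is to interpret $\ind \cDD^+_{APS}$ via a spectral-flow computation and then match that spectral flow to $\ind Q_{--}$. Concretely, I would first deform $\cDD$ through strongly Callias-type operators to an operator $\cDD^0$ which is a product near $\pM$, while keeping all structures product outside $[0,1]\times K$ and keeping the restrictions $\AA_0,\AA_1$ to the boundary hypersurfaces fixed; since the APS Fredholm index is invariant under such deformations (the boundary operators are unchanged), $\ind \cDD^+_{APS} = \ind (\cDD^0)^+_{APS}$.

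Next I would apply the APS-type index formula of \cite{BrShi17,BrShi17b} together with Gilkey's deformation argument \cite{Gilkey93}, as indicated around \eqref{E:Igeneralindex} in the introduction, to write
\begin{multline*}
	\ind\cDD^+_{APS}\ = \
		\int_M\,\alpha_{AS}(\check{\DD}^+)
		\ + \ \int_{\Sigma_1}  \TAS(\cDD^+,\cDD^{0+})
	\ - \ \int_{\Sigma_0}  \TAS(\cDD^+,\cDD^{0+})
	\\ + \
	\frac{\eta(\AA_0,\AA_1)-\dim\ker\AA_0-\dim\ker\AA_1}{2}.
\end{multline*}
The heart of the matter is then to re-express the right-hand side as a spectral flow. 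I would introduce the family $\AAA = \{\AA_1^t\}_{0\le t\le 1}$ interpolating between $\AA_0$ and $\AA_1$ obtained by restricting $\cDD^0$ (or a suitable further deformation) to a moving hypersurface; since this family is product outside $[0,1]\times K$, the relative-eta variation formula quoted in the introduction applies, and combined with the fact that the transgression terms $\TAS$ together with $\int_M \alpha_{AS}$ compute precisely $\int_0^1 \tfrac{d}{ds}\oeta(\AA_1^s,\AA_0)\,ds$ (this is the content of the Atiyah--Patodi--Singer variation formula in the non-compact setting of \cite{BrShi17b}), one obtains
\[
	\ind\cDD^+_{APS}\ = \ \spf(\AAA)\ - \ \dim\ker\AA_1.
\]
Finally, I would invoke the arguments of \cite[\S4.1--4.2]{BarStrohmaier15}, which identify the same spectral flow with $\ind Q_{--}$: the unitarity of $Q$ (Proposition~\ref{P:unitarity}) and the spectral decomposition \eqref{E:spdecomposition} allow one to track how the kernel of the boundary operator moves under the evolution, yielding $\ind Q_{--} = \spf(\AAA) - \dim\ker\AA_1$. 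Comparing the two expressions gives $\ind \cDD^+_{APS} = \ind Q_{--} = \ind \DD^+_{APS}$.

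The main obstacle I expect is the middle step: justifying that the non-compact APS index formula of \cite{BrShi17b} combined with Gilkey's transgression argument genuinely produces the spectral flow of the boundary family, because in the non-compact setting the individual eta-invariants of $\AA_0$ and $\AA_1$ need not exist and one must work throughout with the relative invariant $\eta(\AA_0,\AA_1)$ and its mod-$\ZZ$ reduction $\oeta$. Care is needed to ensure that the deformation $\cDD \rightsquigarrow \cDD^0$, the interpolating family $\AAA$, and the cut-off behavior outside $[0,1]\times K$ are all mutually compatible so that the variation formula applies verbatim; once that bookkeeping is in place, the identification with $\ind Q_{--}$ is a ``verbatim repetition'' of \cite{BarStrohmaier15}, as is asserted for the analogous Lorentzian statements elsewhere in this paper.
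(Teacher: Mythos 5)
Your proposal follows exactly the same route as the paper: Theorem~\ref{T:APSindex} gives $\ind\DD^+_{APS}=\ind Q_{--}$, and the content of your remaining steps is precisely Proposition~\ref{P:ind=spf}, which combines the APS-type formula \eqref{E:generalindex} (obtained via Gilkey's transgression and the relative eta-invariant of \cite{BrShi17b}) with the B\"ar--Strohmaier spectral-flow argument of \cite[\S4.1--4.2]{BarStrohmaier15}, adapted by substituting the relative-eta variation formula \eqref{E:spflowandeta} for the usual eta-variation. The ``main obstacle'' you flag --- working with $\eta(\AA_0,\AA_1)$ and $\oeta$ instead of individual eta-invariants --- is exactly the adaptation the paper makes, so the proposal is correct and essentially coincides with the paper's proof.
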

The proof of the theorem occupies Sections~\ref{SS:APSformulacDDproduct}-\ref{SS:prindcheckD=indD}.

\subsection{The APS index fromula for an elliptic operator: product caes}\label{SS:APSformulacDDproduct}
Assume, first, that $\cDD$ is a product near the boundary, i.e., 
\begin{equation}\label{E:productcaseWickDD}
	\cDD|_{[0,\epsilon)\times\Sigma}\ = \ 
	-\beta\, \Big(-\frac{\p}{\p t}  + i\,\AA_0\,\Big),
	\qquad
	\cDD|_{(1-\epsilon,1]\times\Sigma}\ = \ 
	-\beta\, \Big(-\frac{\p}{\p t}  + i\,\AA_1\,\Big).
\end{equation}
Then $\cDD$ is an {\em almost compact cobordism between $\AA_0$ and $\AA_1$} in the sense of Definition~4.2 of \cite{BrShi17b}. In particular, $\AA_0$ and $\AA_1$ coincide outside of the compact set $[0,1]\times K$. Hence, by \cite[Eq.~(4.1)]{BrShi17b} we obtain
\begin{equation}\label{E:indcDDproduct}
	\ind \cDD^+_{APS}\ = \ 
	\int_M\,\alpha_{AS}(\check{\DD}^+) \ +\ 
	\frac{\eta(\AA_0,\AA_1)-\dim\ker\AA_0-\dim\ker\AA_1}{2},
\end{equation}
where 
\[
	\alpha_{AS}(\cDD)\ := \ 
	(2\pi i)^{-\dim M}\,\hat{A}(M,\check{g})\cdot\ch(E/S)
\]
is the Atiyah-Singer integrand of $\cDD$ and $\eta(\AA_0,\AA_1)$ is the invariant of the pair $(\AA_0,\AA_1)$ introduced in Definition~4.4 of \cite{BrShi17b} and called the {\em relative eta-invariant of $(\AA_0,\AA_1)$}. Since $\cDD$ is product outside of the compact set $[0,1]\times K$, the Atiyah-Singer integrand vanishes outside of this set and,  hence, the integral in the right hand side of \eqref{E:indcDDproduct} is well defined. 

Morally, the relative eta invariant $\eta(\AA_0,\AA_1)$ is the difference of the eta-invariants of $\AA_1$ and $\AA_0$ but the later invariants might not be defined in non-compact case. However, it is shown in \cite{BrShi17b} that, in many respects,  $\eta(\AA_0,\AA_1)$  behaves like it were the difference. In particular, 
\[
	\eta(\AA_0,\AA_1)\ = \ -\,\eta(\AA_1,\AA_0), 
	\qquad
	\eta(\AA_0,\AA_1) \ + \ \eta(\AA_2,\AA_1)\ = \ \eta(\AA_0,\AA_2).
\]

Further, suppose $\AAA:= \{\AA_1^t\}_{0\le t\le 1}$ is a smooth family of strongly Callias-type operator, whose restriction to $M\backslash\big([0,1]\times K\big)$ is independent of $t$. Then the spectral flow $\spf(\AAA)$ is well defined, cf. \cite[Definition~5.7]{BrShi17b}. Then, \cite[Theorem~5.10]{BrShi17b}, the mod $\ZZ$ reduction $\oeta(\AA_1^t,\AA_0)$ of $\eta(\AA_1^t,\AA_0)$ depends smoothly on $t$ and 
\begin{equation}\label{E:spflowandeta}
	2\spf(\AAA)\ = \ 
	\eta(\AA_1^1,\AA_0)\ - \ \eta(\AA_1^0,\AA_0)\ - \ 
	\int_0^1\,\big(\frac{d}{ds}\oeta(\AA_1^s,\AA_0)\big)\,ds. 
\end{equation}

\subsection{The APS index fromula for an elliptic operator: general caes}\label{SS:APSformulacDDgeneral}
Consider now the general case when $\cDD$ is not necessary a product near the boundary of $M$ (recall, however, that we always assume that $\cDD$ is a product outside of a compact set). The method of computing the index of the APS boundary problem in this case is due to Gilkey, \cite{Gilkey75,Gilkey93}. The idea is to deform all the data (the metric $\check{g}$, the connection $\n^E$, the potential $\Phi$) to those which are product near the boundary. Thus we obtain a smooth family of first order elliptic operators $\cDD^s$ ($0\le s\le 1$) such that $\cDD^1= \cDD$ and $\cDD^0$ is product near the boundary. Of course, we assume that the restriction of $\cDD^s$ to $M\backslash([0,1]\times K)$ is independent of $s$. 

By Chern-Weil  theory there is a {\em transgression differential form} $\TAS(\cDD^+,\cDD^{0+})$, such that 
\begin{equation}\label{E:transgression3}
	\alpha_{AS}(\cDD^+)\ - \ \alpha_{AS}(\cDD^{0+})
	\ = \ d\TAS(\cDD^+,\cDD^{0+}).
\end{equation}
This differential form is given by an explicit formula in terms of $\check{g}^s$ and $\n^{E,s}$, cf. \cite[Proposition~1.41]{BeGeVe} or \cite[\S6]{BrMaschler17}. In particular,  this form vanishs outside of $[0,1]\times K$. 

By Stokes formula, 
\begin{multline}\label{E:inttransgression}
	\int_M\,\alpha_{AS}(\cDD)\ = \ \int_M\alpha_{AS}(\cDD^0)
	\ + \ \int_M  d\TAS(\cDD^+,\cDD^{0+}) 
	\\ = \ 
	\int_M\alpha_{AS}(\cDD^0)
	\ + \ \int_{\Sigma_1}  \TAS(\cDD^+,\cDD^{0+}) 
	\ - \ \int_{\Sigma_0}  \TAS(\cDD^+,\cDD^{0+}).
\end{multline}
Recall that the forms $\alpha_{AS}(\cDD^+)$, $\alpha_{AS}(\cDD^{0+})$, $\TAS(\cDD^+,\cDD^{0+})$ vanish outside of a compact subset of $M$. Hence all the integrals in \eqref{E:inttransgression}	are well defined. 

Combining \eqref{E:inttransgression} with \eqref{E:indcDDproduct} and using the stability of the index, $\ind\cDD^+= \ind\cDD^{0+}$, we obtain
\begin{multline}\label{E:generalindex}
	\ind\cDD^+_{APS}\ = \ 
		\int_M\,\alpha_{AS}(\check{\DD}^+) 
		\ + \ \int_{\Sigma_1}  \TAS(\cDD^+,\cDD^{0+}) 
	\ - \ \int_{\Sigma_0}  \TAS(\cDD^+,\cDD^{0+})
	\\ + \ 	
	\frac{\eta(\AA_0,\AA_1)-\dim\ker\AA_0-\dim\ker\AA_1}{2},
\end{multline}

\subsection{Index and the spectral flow}\label{SS:indspflow}
Consider the family of operators $\AAA:=\{\AA_t\}_{0\le t\le 1}$ and let $\spf(\AAA)$ denotes its spectral flow.

\begin{proposition}\label{P:ind=spf}
The following equalities hold
\begin{equation}\label{E:indDD=spf}
	\ind\cDD^+_{APS}\ = \ \spf(\AAA) \ - \ \dim\ker(\AA_1)
	\ = \ 
	\ind Q_{--}.
\end{equation}
\end{proposition}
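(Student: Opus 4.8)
The proposition asserts two equalities, so I would split the argument into two parts. The second equality, $\spf(\AAA) - \dim\ker(\AA_1) = \ind Q_{--}$, is purely a statement about the Lorentzian evolution operator $Q$ and its off-diagonal structure; it should follow from the analysis of Section~\ref{S:propertiesevolution} by a verbatim repetition of the corresponding argument in \cite[\S4.1--4.2]{BarStrohmaier15}. Concretely, the idea is to interpolate: connect the ``initial'' spectral projection $P^0_{[0,\infty)}$ to the ``final'' one $P^1_{(-\infty,0]}$ through the family $\AA_t$, and track how the Fredholm index of the compressed evolution operator changes as eigenvalues of $\AA_t$ cross zero. Each such crossing contributes $\pm1$ to both $\ind Q_{--}$ and to $\spf(\AAA)$, with the $-\dim\ker(\AA_1)$ correction accounting for the boundary convention (closed interval $(-\infty,0]$ at $t=1$ versus open $[0,\infty)$ at $t=0$). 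Since the formal properties of $Q$ used in \cite{BarStrohmaier15} — unitarity (Proposition~\ref{P:unitarity}), Fredholmness of the diagonal blocks and compactness of the off-diagonal blocks (Corollary~\ref{C:Fredholm}, Proposition~\ref{P:compactness}), and the regularity of the kernels (Corollary~\ref{C:regularity}) — have all been established in the non-compact setting, this part goes through unchanged.

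For the first equality, $\ind\cDD^+_{APS} = \spf(\AAA) - \dim\ker(\AA_1)$, I would use the index formula \eqref{E:generalindex} for $\cDD^+_{APS}$ together with the spectral-flow formula \eqref{E:spflowandeta} for the relative eta-invariant. The plan is to compare $\cDD^+_{APS}$ with an operator whose restrictions to the two boundary components are $\AA_0$ and a perturbed operator $\AA_1^s$ obtained by deforming $\AA_1$ back to $\AA_0$ inside $[0,1]\times K$. Running the cobordism index formula \eqref{E:indcDDproduct} for this family and invoking \eqref{E:spflowandeta} to express the difference of relative eta-invariants in terms of $\spf(\AAA)$ and the integral of $\frac{d}{ds}\oeta$, one finds that the local (Atiyah--Singer and transgression) terms in \eqref{E:generalindex} are exactly cancelled by the integral-of-eta-derivative term — this is a transgression/Chern--Weil identity, matching the mod-$\ZZ$ variation of the eta-invariant to the integral of the index density. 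What survives is $\spf(\AAA) - \dim\ker(\AA_1)$, where the $\dim\ker(\AA_1)$ again comes from the precise boundary convention built into the definition of $\eta(\AA_0,\AA_1)$ (the $-\dim\ker\AA_0-\dim\ker\AA_1$ terms in \eqref{E:indcDDproduct} and the asymmetric treatment of the kernel at the two endpoints in the APS boundary condition \eqref{E:cDDAPS}). Here I would follow the arguments of \cite[\S4]{BarStrohmaier15}, now supported by the results of \cite{BrShi17b} on the relative eta-invariant in the non-compact case.

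\textbf{Main obstacle.} The delicate point is bookkeeping of the $\dim\ker(\AA_1)$ correction term and of the various half-integer/integer contributions — making sure the boundary conventions at $t=0$ and $t=1$ (the open spectral interval $[0,\infty)$ versus the closed $(-\infty,0]$), the $-\dim\ker\AA_0-\dim\ker\AA_1$ summands in \eqref{E:indcDDproduct}, and the spectral-flow counting convention all match up without an off-by-one error. A secondary technical point is verifying that the deformation family $\AA_1^s$ used in applying \eqref{E:spflowandeta} satisfies the hypotheses of \cite[Theorem~5.10]{BrShi17b}, in particular that its restriction to $M\backslash([0,1]\times K)$ is $s$-independent so that the spectral flow is well defined; this is automatic from Assumptions~(A1)--(A5), but should be spelled out. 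Modulo this bookkeeping, everything reduces to combining the index formula \eqref{E:generalindex}, the relative-eta variation formula \eqref{E:spflowandeta}, and the formal properties of $Q$ from Section~\ref{S:propertiesevolution} exactly as in \cite{BarStrohmaier15}.
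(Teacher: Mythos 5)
Your proposal follows essentially the same route as the paper: the paper's proof of Proposition~\ref{P:ind=spf} simply invokes a verbatim repetition of the argument in \cite[\S4.1--4.2]{BarStrohmaier15}, substituting the relative-eta variation formula \eqref{E:spflowandeta} for the eta-invariant identities used there, and your plan (split into the two equalities, run the Bär--Strohmaier argument using the formal properties of $Q$ from Section~\ref{S:propertiesevolution} and the index formula \eqref{E:generalindex} together with \eqref{E:spflowandeta}) is exactly that, just spelled out in more detail. Your flagged bookkeeping concerns about the $\dim\ker(\AA_1)$ correction and the hypotheses of \cite[Theorem~5.10]{BrShi17b} are legitimate points to verify but do not represent a gap in the approach.
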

\begin{proof}
For the case when $M$ is compact the proposition is proven in Sections~4.1-4.2 of \cite{BarStrohmaier15}. The proof there only uses the properties of the  right hand side of \eqref{E:generalindex}. A verbatim repetition of this proof (using \eqref{E:spflowandeta} instead of corresponding equations for $\eta(A_1)$ and $\eta(A_0)$ in \cite{BarStrohmaier15}) proves the proposition. 
\end{proof}

\subsection{Proof of Theorem~\ref{T:indcheckD=indD}}\label{SS:prindcheckD=indD}
Combining Theorem~\ref{T:APSindex} with Proposition~\ref{P:ind=spf} we obtain Theorem~\ref{T:indcheckD=indD}. \hfill$\square$

As an immediate corollary of Theorem~\ref{T:indcheckD=indD} we obtain
\begin{corollary}\label{C:geometricindexformilar}
The APS index $\ind\DD^+_{APS}$ of the Lorentzian strongly Callias-type operator $\DD^+$ is given by the right hand side of \eqref{E:generalindex}. 
\end{corollary}


\end{document}